\documentclass[12pt]{iopart}

\usepackage{amsthm}
\usepackage{graphicx}
\usepackage{dsfont}
\usepackage{cases}
\usepackage{color}
\usepackage{stmaryrd}
\usepackage{textcomp}
\usepackage{moredefs}
\usepackage[lined,algonl,boxed]{algorithm2e}

\usepackage{todonotes}
\newcommand{\norm}[1]{\left\|#1\right\|}
\newcommand{\paren}[1]{\left(#1\right)}
\newcommand{\braces}[1]{\left\{#1\right\}}
\newcommand{\diffq}[2]{\frac{\partial #1}{\partial #2}}

\newcommand{\Mc}[1]{{\mathcal #1}}
\newcommand{\Eq}[1]{(\ref{eq:#1})}

\newcommand{\Fig}[1]{Figure~\ref{fig:#1}}
\newcommand{\Figs}[2]{Figures~\ref{fig:#1} and~\ref{fig:#2}}

\newcommand{\Sec}[1]{Section~\ref{sect:#1}}

\newcommand{\Lem}[1]{Lemma~\ref{lem:#1}}

\newcommand{\Cov}{\mathrm{Cov}}

\newcommand{\rv}[0]{\mathbf{r}}
\newcommand{\dv}[0]{\mathbf{d}}

\newcommand{\kv}[0]{\mathbf{k}}

\def\EE{\mathbb{E}}
\def\curl{\mathrm{\,curl}\,}
\def\curls{\mathrm{\,curl}}
\def\div{\mathrm{\,div}\,}
\def\divs{\mathrm{\,div}\!}
\def\grad{\mathrm{\,grad}\,}
\def\grads{\mathrm{\,grad}}

\newcommand{\curli}[1]{\,\mathrm{curl}_{\rho,#1}}
\newcommand{\divi}[1]{\,\mathrm{div}_{\rho,#1}}
\newcommand{\gradi}[1]{\,\mathrm{grad}_{\rho,#1}}
\newcommand{\argmin}{\mathrm{argmin}}
\newcommand{\diag}{\mathrm{diag}}
\newcommand{\blockdiag}{\mathrm{blockdiag}}
\newcommand{\trace}[1]{\mathrm{trace}\left(#1\right)}
\newcommand{\sol}{v}
\newcommand{\hatsol}{\widehat{\sol}}
\newcommand{\bsol}{\mathbf{\sol}}
\newcommand{\mom}{p}

\newcommand{\bmom}{\mathbf{\mom}}
\newcommand{\Xspace}{\mathbb{X}}
\newcommand{\Yspace}{\mathbb{Y}}
\newcommand{\lsp}{\left\langle}
\newcommand{\rsp}{\right\rangle}
\newcommand{\bg}{\mathbf{g}}
\newcommand{\bq}{\mathbf{q}}
\newcommand{\bw}{\mathbf{w}}
\newcommand{\qmom}{q}
\newcommand{\vl}{\sol_l}
\newcommand{\D}{\,\mathrm{d}}
\newcommand{\Mm}{\,\mathrm{Mm}}

\newtheorem{theorem}{Theorem}[section]
\newtheorem{lemma}[theorem]{Lemma}

\newtheorem{remark}[theorem]{Remark}
\newtheorem{definition}[theorem]{Definition}
\usepackage{iopams}  

\begin{document}

\title[Pinsker estimators for local helioseismology]%
{Pinsker estimators for local helioseismology: inversion of travel times for mass-conserving flows}

\author{Damien Fournier$^1$, Laurent Gizon$^{2,3,4}$,  Martin Holzke$^1$, Thorsten Hohage$^1$}

\address{$^1$ Institut f\"ur Numerische und Angewandte Mathematik, Georg-August-Universit\"at, \\ Lotzestr. 16-18, D-37083 G\"ottingen \\
$^2$ Max-Planck-Institut f\"ur Sonnensystemforschung, Justus-von-Liebig-Weg 3, 37077 G\"ottingen, Germany \\
$^3$ Institut f\"ur Astrophysik, Georg-August-Universit\"at G\"ottingen,  Friedrich-Hund-Platz 1, 37077 G\"ottingen, Germany \\
$^4$ National Astronomical Observatory of Japan, Mitaka, Tokyo 181-8588, Japan}
\ead{d.fournier@math.uni-goettingen.de}
\vspace{10pt}
\begin{indented}
\item[]\today
\end{indented}


\begin{abstract}
A major goal of helioseismology is the three-dimensional reconstruction of
the three velocity components of convective flows  in the solar interior from sets
of wave travel-time  measurements.
For small amplitude flows, the forward problem is described in good
approximation by a large system of convolution equations.
The input observations are highly noisy random vectors with a known dense
covariance matrix.
This leads to a large statistical linear inverse problem.

Whereas for deterministic linear inverse problems several computationally efficient minimax optimal 
regularization methods exist, only one minimax-optimal linear 
estimator exists for statistical linear inverse problems: the Pinsker estimator. However, it is often computationally inefficient because it requires 
a singular value decomposition of the forward operator or it is not applicable because of an unknown noise covariance matrix, 
so it is rarely used for real-world problems. 
These limitations do not apply in  helioseismology.
We present a simplified proof of the optimality properties of the Pinsker estimator and show that it yields significantly 
better reconstructions than traditional inversion methods used in helioseismology, i.e.\ Regularized Least Squares (Tikhonov regularization) 
and SOLA (approximate inverse) methods. 

Moreover, we discuss the incorporation of the mass conservation constraint in the Pinsker scheme 
using staggered grids. With this improvement we can reconstruct not only 
horizontal, but also vertical velocity components that are much smaller in amplitude. 
\end{abstract}

%
%
%
%
%

\section{Introduction}

Time-distance helioseismology aims at recovering the internal properties of the Sun from 
measurements of wave travel times between pairs of points \cite{DUV93}. The raw observations in helioseismology are time sequences of images of the line-of-sight velocity on the 
solar surface via Doppler shift measurements, for example from the Solar Dynamics Observatory (45 s cadence since 2010). 
These Doppler velocities contain information about the stochastic seismic wave field (acoustic waves and surface-gravity waves). 
Using a cross-correlation technique Duvall et al.~\cite{DUV93} showed that it is possible to measure the time it takes a wave 
packet to travel between any two points on the surface through 
the solar interior.  
The wave travel times are linked to 
(perturbations of) physical quantities 
via a large system of convolution equations. In this paper we focus on the estimation of flows. 
The inversion is traditionally performed using Tikhonov regularization \cite{TIK77} or the method 
of approximate inverse \cite{LOU90,SCH07} that are respectively called in the helioseismology community, Regularized Least Square 
(RLS) \cite{KOS96} and (Subtractive) Optimally Localized Averaging (OLA/SOLA) \cite{HAB04}.
The latter goes back to the Backus-Gilbert method \cite{BAC67} and, as pointed out by 
Chavent \cite{chavent:98}, it is also closely related to the method of sentinels introduced 
by J.L.~Lions for control problems (see \cite{lions:88a}). 

For overviews on linear statistical inverse problems we refer to \cite{CAV08,ES:02,tenorio:01}. 
Optimal rates of convergence for spectral regularization methods, in particular Tikhonov regularization, 
were shown in \cite{BIS07}, and for the CG method in \cite{BM:12}. 
Pinsker-type estimator for deconvolution problems on the real line were studied theoretically in 
different degrees of generality in a series of papers by Ermakov (see e.g.\ \cite{ERM90,ermakov:03}).  
The case of periodic deconvolution problems with noise in the operator was treated in \cite{CH:05}. A minimax estimator for spherical deconvolution over 
a reduced class of estimators was developed in \cite{HQ:15}.

For  linear inverse problems in Hilbert spaces with additive random noise Pinsker estimators 
are optimal in the following sense: For a given ellipsoid 
spanned by singular vectors of the forward operator, the Pinsker estimator minimizes 
the maximal risk (or expected square error) over this ellipsoid among all linear 
estimators. We point out that for deterministic inverse problems 
typically many optimal methods exist, e.g.\ Tikhonov regularization, some types of singular value 
decompositions, the Showalter methods and (asymptotically) Landweber iteration and Lardy's methods, 
each of course with an optimal choice of the regularization parameter or stopping index  
(see \cite{VAI87,TAU98}). In contrast, for statistical inverse problems,  the Pinsker method is the only minimax linear estimator (\cite{KO:71,PIN80}).
Moreover, it was shown by Pinsker \cite{PIN80} under mild assumptions that it is even asymptotically 
optimal among all (not necessarily linear) estimators if the noise is Gaussian.  
In most real world applications this estimator cannot 
be applied for two main reasons: First, it requires the computation of a Singular Value 
Decomposition (SVD) of the forward operator which is often not affordable due to the 
size of the problem. Second, the noise covariance matrix has to be known while only a poor 
estimate is generally available. This explains why other methods such as Tikhonov 
regularization or Conjugate Gradient methods are more often used for 
real world applications. However, these limitations are not problematic for the 
helioseismology problem studied here since the forward operator separates into a collection 
of small matrices for each spatial frequencies, for which an SVD can be computed in reasonable time,  
and the noise covariance matrix is known \cite{GIZ04, FOU14}. 
In this paper we will study the implementation and performance of 
Pinsker estimators for such problems. 

A notorious difficulty in local helioseismology is the inversion for vertical velocity components 
as their amplitude is much smaller than for horizontal velocities. The failure of inversion 
was reported in several publications using synthetic data (see e.g. \cite{ZHA07,DOM13}) and 
was explained by the crosstalk between the variables. 
Here, we show that incorporation of the mass conservation constraint in the 
Tikhonov or Pinsker methods allows to overcome these difficulties. We will discuss the 
implementation of mass conservation constraints with the help of staggered grids for 
the horizontal and the vertical velocity components.  

The plan of this paper is as follows: After introducing the physical background and the forward problem in 
\Sec{physics}, we describe in \Sec{helio} the inversion methods that are commonly used in this field so far.  
Then we introduce the Pinsker estimator in \Sec{pinsker} and present a simple proof that it is the unique 
minimax linear estimator. \Sec{mass} is devoted to the incorporation of the mass conservation constraint 
into this regularization scheme.
Finally, numerical results demonstrating the advantages of Pinsker methods compared to the state-of-the-art methods 
are discussed in \Sec{numerics}.

\section{Estimating flows by local helioseismology} \label{sect:physics}

In local helioseismology, it is acceptable to consider small patches of the solar surface and to neglect solar curvature. The domain of interest is approximated by a Cartesian box, with horizontal coordinates $\rv = (x,y)$ and vertical coordinate (height) $z$. Let us denote this domain by $V$.  Typically, $x$ and $y$ span several hundreds of megameters and $z$ several tens of megameters. 


The observables are time series of the line-of-sight velocities $\psi(\rv, t_j)$
at different points $\rv$ obtained from dopplergrams of the Sun's surface taken by satellites 
at equidistant time points $t_j$. 
From these quantities, we compute averaged travel times 
$\tilde{\tau}^a(\rv)$ at different points $\rv$ (and at time $t_0$, but we assume the time series $\psi$ to be stationary)
of the form 
\[
\tilde{\tau}^a(\rv) = \sum_j \int \Cov(\psi(\rv,t_0),\psi(\rv+\tilde{\rv},t_0+t_j))w^a(\tilde{\rv},t_j)
\D\tilde{\rv},\qquad a=1,\dots,N_a.
\]
(We reserve the symbol $\tau^a$ for differences of 
$\tilde{\tau}^a$ to a reference model.)
The weights $w^a$ are chosen such that $\tilde{\tau}^a(\rv)$ approximates a spatial average of the times a 
certain type of wave packet needs to travel from point $\rv$ to points $\rv+\tilde{\rv}$, see \cite{BOG97,DUV93} and the end of this section for more details. 
Hence, what will be called travel times in the following are linear functionals 
of the covariance operator of the observable $\psi$, written as $\tilde{\tau}^a = \Mc{W}_a(\mathbf{Cov}[\psi])$ or 
$\tilde{\tau} = \Mc{W}(\mathbf{Cov}[\psi])$ for the vector $\tilde{\tau} =(\tilde{\tau}^a)_{a=1..N_a}$ of all 
travel times. 

The observable $\psi$ is the image of the wave displacement $\bxi=\bxi(\rv,z, t)$ under 
an observation operator $\Mc{T}$, i.e.\ $\psi= \Mc{T}\bxi$. 
Ideally, $\psi(\rv,t)= l(\rv,t)\cdot \bxi(\rv,0,t)$ 
with the unit-length line-of-sight vector $l(\rv,t)$, but in practice $\Mc{T}$ also involves the point 
spread function of the instrument. 
%
%
The wave displacement is linked to internal properties of the Sun via a PDE describing the wave propagation in the Sun \cite{BUR15}:
\begin{equation*}
 \Mc{L}\bxi := \rho \left( \partial_t + \Gamma + \bsol \cdot \nabla \right)^2 \bxi - \nabla \left(  \rho c^2 \nabla \cdot \bxi \right) + \nabla \left( \bxi \cdot \nabla P \right) + \nabla \cdot \left( \rho g  \bxi \right) = f,
\end{equation*}
where $\rho$ is the density, $c$ the sound speed, $P$ the pressure, $g$ the gravitational acceleration, $\Gamma$ the damping, $\bsol$ the flow, and $f$ a (stochastic) source term responsible for the excitation of the seismic waves. Additional terms can be included to take into account the effects of rotation, magnetic field or a more complex form of the gravitational term.

Our aim is to recover the 3D flow velocity field $\bsol=(\sol^{\rm x},\sol^{\rm y},\sol^{\rm z})$ from observed travel times $\tilde{\tau}$. Inversion for other physical 
quantities can be performed analogously. We point out that actually computations are performed in the 
frequency domain, but at least formally we can write the forward operator as 
$F(\bsol) = \Mc{W}\big(\Mc{T}\Mc{L}[\bsol]^{-1}\mathbf{\Cov[f]}(\Mc{L}[\bsol]^{-1})^*\Mc{T}^*\big)$, 
so we have to solve the nonlinear operator equation $\tilde{\tau} = F(\bsol)+n$ where $n$ denotes noise. 
Under the assumption that $\bsol$ is small compared to the 
local wave speed, which is true at least in quiet parts of the Sun, the Born approximation $F(\bsol)\approx F(0)+ F'[0]\bsol$ 
is sufficiently accurate \cite{GIZ02}, and we obtain 
the linear operator equation $F'[0]\bsol = \tau+n$ with $\tau:=\tilde{\tau}-F(0)$. The operator 
$F'[0]$ can be written as an integral operator, and due to horizontal translation invariance 
the Schwartz kernel $K$ of $F'[0]$ only depends on the difference $\mathbf{r}-\mathbf{r}'$, so 
\begin{equation}
 \tau^a(\mathbf{r}) = \int_V \sum_{\beta\in\{\mathrm{x,y,z}\}} K^{a;\beta}(\mathbf{r}'-\mathbf{r}, z) \sol^\beta(\mathbf{r}',z) d^2\mathbf{r}' dz + n^a(\mathbf{r}), \qquad a=1,\ldots,N_a
 \label{eq:travelTime}
\end{equation}
(see \cite{GIZ02}). The functions $K^{a;\beta}$ are known as 
sensitivity kernels, but in contrast to the convention used in helioseismology where $K^{a;\beta}(\mathbf{r}'-\mathbf{r}, z)$ is replaced by $K^{a;\beta}(\mathbf{r}'+\mathbf{r}, z)$, we use a standard convolution integral as it is mathematically more convenient.
The assumption that the kernels are invariant under horizontal translation is intimately connected to the assumption that we are modeling only a small patch on the solar surface.

Due to mass conservation the flow velocity satisfies the equation 
\begin{equation}\label{eq:mass_conservation}
\div(\rho \bsol) = 0,
\end{equation}
where the mass density $\rho$ is assumed to depend on $z$ only. Note that this constraint reduces the 
effective number of unknowns of the inverse problem by about one third.

Besides the Born approximation we will use two further simplifying assumptions: 
The first approximation consists in imposing periodic boundary conditions in the horizontal 
variables. Since the kernels are localized, aliasing artifacts can be avoided by zero-padding, 
but, nevertheless, this approximation leads to a loss of information close to the boundaries. 
We may assume without further loss of generality that the periodicity cell is 
$[-\pi,\pi]^2$ in dimensionless coordinates. 
The second approximation consists in a discrete treatment of the depth variable $z$. 
For simplicity, we assume that the $\sol^{\beta}(\mathbf{r},\cdot)$ is represented by its values on a grid $\{z_1,\dots,z_{N_z}\}$ 
and define $\sol^{\beta,z_j}(\mathbf{r}):=\sol^{\beta}(\mathbf{r},z_j)$. 

Then, \Eq{travelTime} can be written  as
\begin{equation}
 \tau^a(\mathbf{r}) = \sum_{j=1}^{N_z} \sum_{\beta\in\{\mathrm{x,y,z}\}} \left(K^{a;\beta,z_j} \ast \sol^{\beta, z_j}\right)(\mathbf{r}) 
+ n^a(\mathbf{r}),
\qquad a=1,\ldots,N_a
 \label{eq:travelTime2}
\end{equation}
where $\ast$ denotes periodic convolution. Denoting by 
\begin{equation*}
 \sol_\kv:=(2\pi)^{-2}\int_{-\pi}^{\pi}\int_{-\pi}^{\pi} f(\mathbf{r})\exp(- i\mathbf{r}\cdot \kv)\,dx dy \: , \kv\in\mathbb{Z}^2,
\end{equation*}
the Fourier coefficients of a periodic function $f:(\mathbb{R}/\mathbb{Z})^2\to \mathbb{C}$, we can write \eref{eq:travelTime2} 
equivalently in Fourier space as 
\begin{equation}
 \tau^a_\kv = \sum_{j=1}^{N_z} \sum_{\beta\in\{\mathrm{x,y,z}\}} K^{a;\beta,z_j}_\kv \sol^{\beta, z_j}_\kv + n^a_\kv,
\qquad a=1,\ldots,N_a, \kv\in\mathbb{Z}^2. \label{eq:travelTimeFourier}
\end{equation}
The problem is now decoupled for each spatial frequency $\kv$ and can be written in a matrix form as
\begin{equation}
 \tau_\kv = K_\kv \bsol_\kv + n_\kv,
\qquad \kv\in\mathbb{Z}^2 \label{eq:pbIni}
\end{equation}
where the quantities we want to recover have been reorganized in the column vectors 
$\bsol_{\kv} = (\sol^{\beta,z_j}_\kv)_{\beta,z_j} \in 
\mathbb{C}^{3N_z}$,  the observables are $\tau_\kv = (\tau^a_\kv)_a \in \mathbb{C}^{N_a}$, and the Fourier transformed 
convolution kernels are $K_\kv = (K^{a;\beta, z_j}_\kv) \in \mathbb{C}^{N_a \times 3N_z}$. 

The noise is assumed to be translation invariant, so the noise covariance matrix, 
\[
\Lambda^{ab}(\dv) = \Cov[n^a(\rv), n^b(\rv+\dv)],\qquad a,b=1,\dots,N_a
\]
does not depend on $\rv$. As a consequence, noise vectors $n_\kv, n_{\kv'}$ for different  
spatial frequencies $\kv,\kv'\in \mathbb{Z}^2$ are uncorrelated, and the covariance matrix of $n_\kv$ is 
given by $\Lambda_\kv = (\Lambda^{ab}_\kv)_{ab} \in \mathbb{C}^{N_a\times N_a}$.  
An expression for these matrices was first derived in \cite{GIZ04} 
and generalized in \cite{FOU14} taking into account that the observation time is finite.


For our computations we will use the  kernels $K$ from \cite{SVA11}, which 
we are going to describe briefly.
We consider a Cartesian patch of the solar surface containing 200 $\times$ 200 
pixels with a spatial sampling width of $h_x = 1.46\Mm$. The vertical direction $z$ is discretized with $N_z = 89$ points 
using a variable step size as the variations are stronger close to the surface due to the density profile. This variation 
of the mass  density of several orders of magnitude near the surface is one of the 
difficulties to invert for velocities. The quantity $\mathbf{v} = (\sol^{\mathrm{x}}, \sol^\mathrm{y}, \sol^\mathrm{z})$ we want to recover has thus 
$3N_z=267$ degrees of freedom for each spatial frequency $\kv$.

In order to improve the signal-to-noise ratio, certain averages of point-to-point travel times are 
used, for example between 
the center of a disk and all the points located at a given radius of this disk. Such types of 
data are sensitive to in/out flows in this disk. Imposing other weights on the circle leads to 
data that are sensitive to East-West or North-South flows. 
Varying the center of this disk on the whole surface of the observational domain allow to build 
a map of observations. 
We use each of these three averaging schemes for $16$ radii from $5\Mm$ to $20\Mm$. Moreover, we use 
filters for f, p1, p2, p3, and p4 waves. (The first one is a gravitational wave, and the latters 
are acoustic waves with 1,2,3 or 4 nodes.) 
This yields $N_a=3\times 16\times 5=240$ travel time data for each of the $200^2$ points on the solar surface. 
Thus, the kernels $K_\kv$ are of the size $240 \times 267$ for each of the $200^2$ frequencies 
$\kv$. 

To provide some intuition for the problem we are solving, a representation of kernels for $v_x$ and $v_z$ using different filters is given in \Fig{kernel}. The right column represents cuts at $z=0$ (surface of the Sun) for the part sensitive to $v_x$ (top) and $v_z$ (middle). The kernels are localized around the center indicating that the data are relatively close to the quantities we want to infer for. The bottom plot shows the cross-talk betwween $v_z$ and $v_x$ i.e. how the data sensitive to $v_z$ are related to the ones for $v_x$. One can see that the amplitude is around ten times smaller than the one of the main kernel (top) and that the integral of this kernel is zero indicating that the average value of $v_z$ is not influenced by $v_x$. The left and right columns of \Fig{kernel} show the depth dependance of the kernels for different type of waves. One can see the importance of using different waves in order to probe different depths in the solar interior. However, all kernels are extremely sensitive to the surface making inversion at large depths highly ill-posed. 

\begin{figure}
\centering
 \includegraphics[ width=0.9\linewidth]{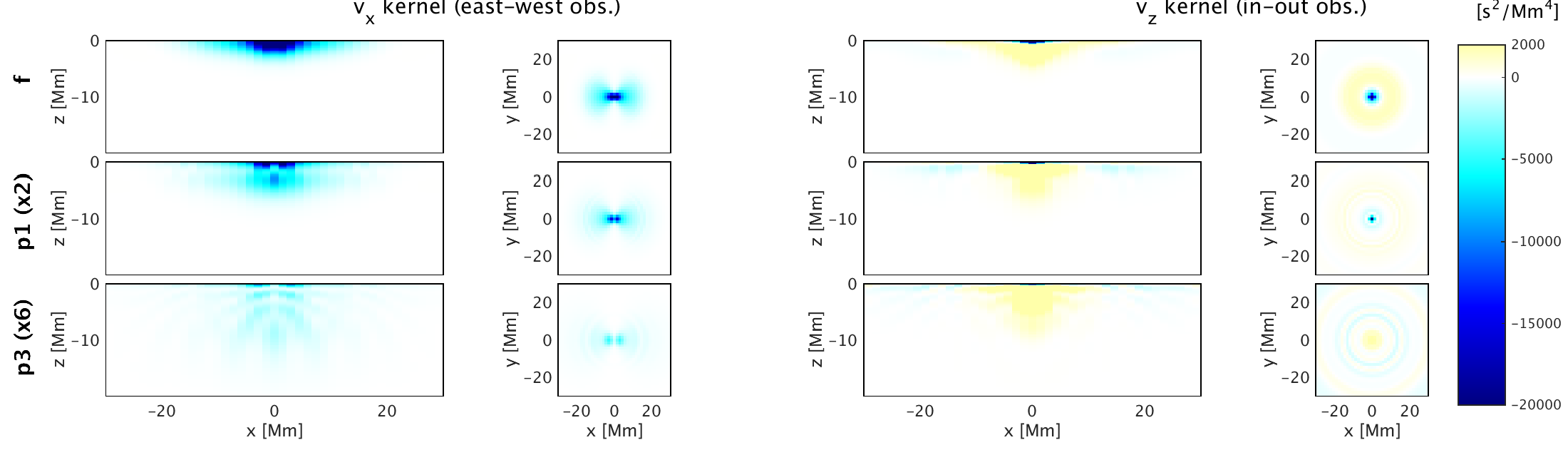}
 \caption{6 out of 240 sensitivity kernels describing the forward operator. 
Columns 1 and 3: cuts at $y=0$; columns 2 and 4: cuts at $z=0$. The different rows show different wave filters, from top to bottom: f, p1 and p3. To facilitate the comparison the amplitude of p1 (resp. p3) kernels have been multiplied by 2 (resp. 6). The largest the radial order the deepest is the sensitivity.
The columns 1 and 2 show East-West averaging schemes that are designed to be sensitive to 
$\sol^{\rm x}$ velocities, and columns 3 and 4 are in-out averaging sensitive  $\sol^{\rm z}$ velocities.}  \label{fig:kernel}
\end{figure}

An example of travel time map for a given filter is given in \Fig{travelTime} before adding the noise and after. 
The noise level corresponds to data averaged over 4 days with a temporal sampling of 45 seconds. Even with such a long averaging time, one can see that the noise is highly correlated, which underlines the importance of a good knowledge of the noise covariance matrix as computed in \cite{FOU14,GIZ04}.

 \begin{figure}
 \centering
 \includegraphics[trim=1cm 1.2cm 4cm 0cm, clip=true, width=0.9\linewidth]{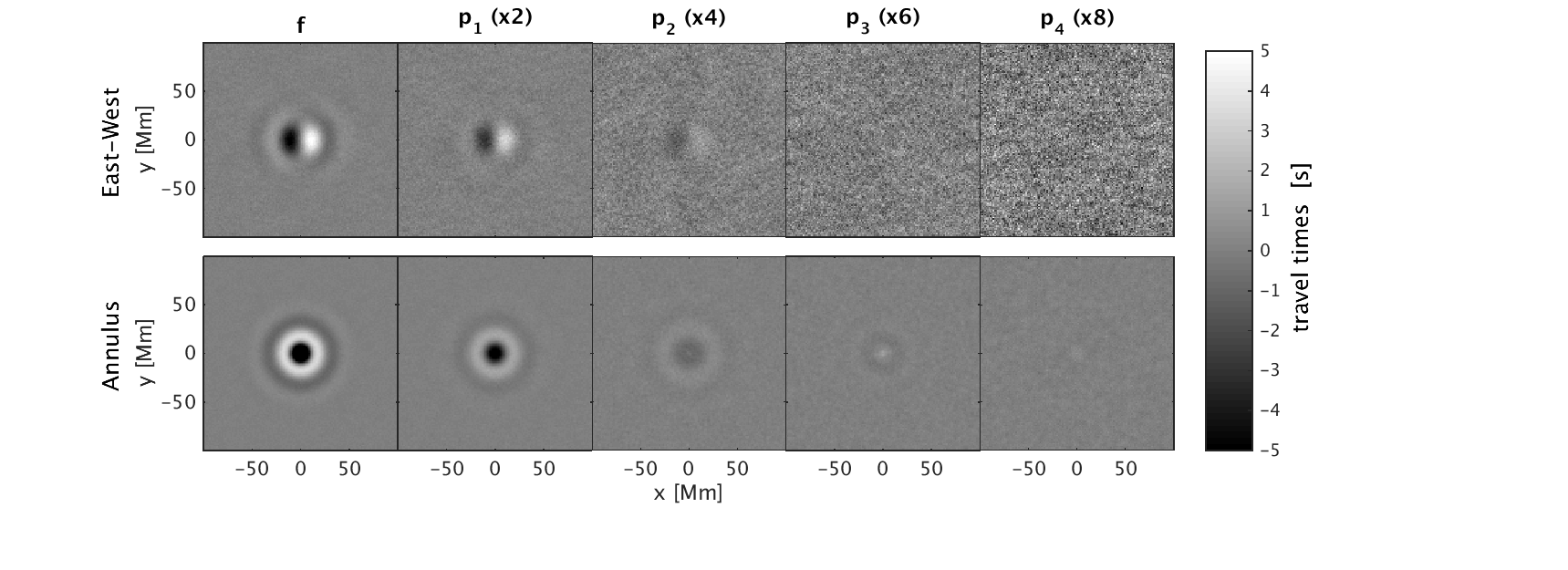} 
  \caption{Noisy synthetic travel times for a supergranule velocity model 
from Ref. \cite{DOM13}. The top label indicates the filter and the factor by which the data are multiplied (e.g. times 2 for p1 mode). The deeper the waves are travelling, the noisier the observations.} \label{fig:travelTime}
 \end{figure}


\section{Classical inversion methods used in local helioseismology} \label{sect:helio}

\subsection{Regularized Least Squares (RLS)}
Tikhonov regularization is generally called Regularized Least Squares (RLS) in the helioseismology community. 
Since the problem decouples for all $\kv$ (\cite{JAC99}), we can compute 
\begin{equation}\label{eq:RLS}
\eqalign{
\hatsol_\kv^{\rm RLS}&:= \argmin_{\sol_\kv}\left[
 \norm{\Lambda_\kv^{-\frac{1}{2}} (K_\kv \sol_\kv - \tau_\kv)}^2 + \alpha \norm{L_\kv \sol_\kv}^2\right] \\
&= \left(K_\kv^*\Lambda_\kv^{-1}K_\kv+\alpha L_{\kv}^*L_{\kv}\right)^{-1} K_\kv^*\Lambda_\kv^{-1}\tau_\kv
}
\end{equation}
independently for all spatial frequencies $\kv\in\mathbb{Z}^2$. 
Here $\alpha>0$ is the regularization parameter,  
and $L_\kv$ is a regularization matrix that can be the identity or the discretized version of the gradient or the
Laplacian in order to impose additional smoothness on the solution.

\subsection{Optimally Localized Averaging (OLA)}

Different types of Optimally Localized Averaging (OLA) methods are used in helioseismology. Recently, it was proposed to take 
advantage of the convolution in the horizontal space and to propose a multichannel OLA \cite{JAC12}. Similar to the 
previous approach the problem decouples for all frequencies and can be solved efficiently. We seek for a linear combination 
of travel times via weighting matrices $W_\kv = (W^{\beta, z_j;a}_\kv) \in \mathbb{C}^{3N_z \times N_a}$ 
(the Fourier coefficients of weighting kernels $W(\mathbf{r}):=\sum_{\kv\in\mathbb{Z}^2}W_\kv\exp(i \mathbf{r}\cdot \kv)$ with values in $\mathbb{R}^{3N_z \times N_a}$) such that
\begin{equation}\label{eq:SOLA}
 \hatsol_\kv = W_\kv \tau_\kv, \qquad \kv\in\mathbb{Z}^2
\end{equation}
is a good estimate of $\sol_\kv$. 

Note from the second line in \eref{eq:RLS} that RLS is also of this form 
with $W_{\kv}^{\mathrm{RLS}}=(K_\kv^*\Lambda_\kv^{-1}K_\kv+\alpha L_{\kv}^*L_{\kv})^{-1} K_\kv^*\Lambda_\kv^{-1}$. 
Inserting \Eq{travelTimeFourier} into \Eq{SOLA} yields 
\begin{equation}\label{eq:SOLA1}
 \hatsol_\kv =  W_\kv K_\kv \sol_\kv +  W_\kv n_\kv. 
\end{equation}

\begin{definition}\label{defi:averaging_kernel}
For a regularization method of the form \eref{eq:SOLA} 
the function $\Mc{K}(\mathbf{r}):=\sum_{\kv\in\mathbb{Z}^2}\Mc{K}_\kv\exp(i \mathbf{r}\cdot \kv)$ with 
Fourier coefficients 
 \begin{equation}
\Mc{K}_\kv:= W_\kv K_\kv
 \end{equation}
and values in $\mathbb{R}^{3N_z\times 3N_z}$ is called the \emph{averaging kernel} of the method. 
(Often only specific rows of $W$ and $\Mc{K}$ corresponding to a specific depth $z_j$ and a Cartesian 
component $\beta$ are considered. We will denote them by 
$W[\beta,z_j;:](\mathbf{r})$ and $\Mc{K}[\beta,z_j;:](\mathbf{r})$.) 
\end{definition}
Note from \eref{eq:SOLA1} that the expectation $\EE[\hatsol]$ and hence the bias $\EE[\hatsol]-\sol$ 
of the estimator $\hatsol$  is characterized by a convolution with the averaging kernel:
\[
\EE[\hatsol] = \mathcal{K}*\sol.
\]
To keep the bias small the diagonal entries ($\alpha=\beta$) of the averaging kernel $\Mc{K}^{\alpha,z_j,\beta,z_l}(\mathbf{r})$ 
should be well concentrated around $z_l \approx z_j$ and $\mathbf{r}=0$. The off-diagonal entries ($\beta\neq\alpha$)
measure the leakage from one Cartesian component $\beta$ to another component $\alpha$ and should be small. 

The SOLA (Subtractive OLA) methods aims at finding rows of a weighting kernel $W$ indexed by $\beta,z_j$
such that the corresponding rows of the averaging kernel 
$\Mc{K}$ are as close as possible to rows of a prescribed target function $\Mc{T}(\mathbf{r})\in \mathbb{R}^{3N_z\times 3N_z}$ 
while keeping the noise (last term in \Eq{SOLA1}) small. This can be achieved by setting 
\begin{eqnarray}\label{eq:SOLAmin}
W_\kv^{\rm OLA}[\beta,z_j;:] &:=& \argmin_{W\in\mathbb{C}^{1\times N_a}}\left[\|WK_\kv-\Mc{T}_\kv[\beta,z_j;:]\|^2
+ \mu W \Lambda_\kv W^*\right]
\end{eqnarray}
(see \cite{JAC12}) where $\mu>0$ is a trade-off parameter. Other objective functional can be chosen, see e.g.\ \cite{SVA11}. 
The target function $\Mc{T}^{\beta, z_j;\alpha,z_l}(\rv)$ for $\alpha=\beta$ is generally chosen as a Gaussian 
in $(\rv,z_l)$ around the point $(0,z_j)$. For $\alpha\neq\beta$ it is chosen as 0. Obviously, the convex quadratic minimization problem \eref{eq:SOLAmin} can be solved by solving the linear first order optimality conditions. 
We also mention the MOLA (Multiplicative OLA) \cite{CHR90} method which uses a product $\Mc{K} \Mc{T}$ instead of 
the difference.


These methods involve  the target functions $\Mc{T}$ and the parameter $\mu$ 
as parameters, the choices of which are not obvious and involve certain subjectivity. 
In the next 
section, we propose to use the Pinsker estimator that is optimal in the sense that it minimizes the risk in a 
given class of functions.

\section{Pinsker estimator} \label{sect:pinsker}

The problem described in Section \ref{sect:physics} can be formulated as a linear operator equation 
\begin{equation}
 \tau = K \sol + n. \label{eq:pbCont}
\end{equation}
in the Hilbert spaces $\mathbb{X} = L^2([-\pi,\pi]^2)^{3N_z}$ and $\mathbb{Y}= L^2([-\pi,\pi]^2)^{N_a}$ 
with a compact, linear operator $K:\mathbb{X}\to \mathbb{Y}$ given by a matrix of convolution operators. 

We assume that the noise  $n$ is a Hilbert space process in $\mathbb{Y}$ with zero mean value 
and known covariance operator $\Cov[n]$. The modelling errors that are ignored in the 
assumption $\mathbb{E}[n]=0$ and references for $\Cov[n]$ 
have been discussed in Section \ref{sect:physics}. 

An estimator is an operator $W:\mathbb{Y}\to \mathbb{X}$ that maps observations 
$\tau$ to an approximation $W\tau\in\mathbb{X}$ of $\sol$.  
The risk (or expected square error) of an estimator $W$ at $\sol$ is defined by
\begin{equation}
 R(W, \sol) = \mathbb{E} \left[ \norm{W(K\sol + n) - \sol}^2 \right]. \label{eq:risk}
\end{equation}
If $W$ is linear, the risk can be decomposed into a bias 
and a variance part using $\mathbb{E}[n] = 0$:
\begin{equation}
  R(W, \sol) = \norm{(WK - I)\sol}^2 + \mathbb{E}[\norm{Wn}^2]. \label{eq:biasVariance}
\end{equation}
The bias $\norm{(WK - I)\sol}^2$ describes how far $W$ is from the inverse of the forward operator 
while the variance term 
\(
\mathbb{E}[\norm{Wn}^2] = \trace{\Cov[Wn]} = \trace{W^*\Cov[n]W}
\)
describes the stochastic part of the error. 

The \emph{maximal risk} of an estimator $W$ on a set $\Theta \subset \mathbb{X}$ is defined as 
\begin{equation}
 R(W, \Theta) = \sup_{\sol \in \Theta} R(W, \sol) =  \sup_{\sol \in \Theta} \mathbb{E} \left[ \norm{W(K\sol + n) - \sol}^2 \right]. \label{eq:riskMax}
\end{equation}
The \emph{minimax risk} and the \emph{minimax linear risk} on $\Theta$ are obtained 
by taking the infimum over all estimators (or all linear estimator, resp.) of \Eq{riskMax}
\begin{equation}\label{eq:linearMinimaxRisk}
 R^N(\Theta) = \inf_W R(W, \Theta),\qquad 
 R^L(\Theta) = \inf_{W \: \rm{ linear}} R(W, \Theta). 
\end{equation}
A linear estimator $W$ that attains the infimum in \Eq{linearMinimaxRisk} 
is called a \emph{minimax linear estimator}. To construct such an estimator for \Eq{pbCont} 
we first perform a whitening by multiplying \Eq{pbCont} from the left by $\Cov[n]^{-1/2}$ to obtain 
\begin{equation}
 \tilde{\tau} = \tilde{K} \sol + \tilde{n} \label{eq:pbWhite}
\end{equation}
where $\tilde{\tau}:= \Cov[n]^{-1/2}\tau$ and
$\tilde{n}:= \Cov[n]^{-1/2}n$ is now a white noise process, i.e.\ $\Cov[\tilde{n}] = I_{\mathbb{Y}}$. 
To ensure that $\tilde{K}:=\Cov[n]^{-1/2}K$ is well defined, we assume that $\Cov[n]$ is strictly positive 
definite, i.e.\ every linear functional of $\tau$ contains a minimal fixed amount of noise. 
Although this assumption could be relaxed, it is simple and intuitive, and also guarantees compactness 
of $\tilde{K}$. Hence, $\tilde{K}$ admits a 
singular value decomposition $\left\{ (\sigma_l, \varphi_l, \psi_l)\, :\, l \in \mathbb{N} \right\}$. 
This allows us to rewrite the operator equation \Eq{pbCont} as a diagonal operator equation 
in sequence spaces given by 
\begin{equation}
 y_l = \sigma_l \vl +  n_l \label{eq:pbPinsker}
\end{equation}
with observables $y_l:=\langle \Cov[n]^{-1/2}\tau,\psi_l\rangle_{\mathbb{Y}}$ and unknowns 
$\vl:=\langle \sol,\varphi_l\rangle_{\mathbb{X}}$. Due to Gaussianity 
the noise $(n_l)_{l \in \mathbb{N}}$ is a sequence of uncorrelated  $\Mc{N}(0,1)$ random variables. 
Let us consider linear diagonal estimators of the form 
\begin{equation}\label{eq:diag_estimator}
 \hat{\sol}_l := \frac{\lambda_l}{\sigma_l} y_l, \qquad 
W_{\lambda}\tau := \sum_{l\in\mathbb{N}} \frac{\lambda_l}{\sigma_l}\langle\Cov[n]^{-1/2}\tau,\psi_l \rangle_{\mathbb{Y}} \varphi_l
\end{equation}
with weights $\lambda_l\in\mathbb{R}$. 
The risk  $R(\lambda, \sol) :=  R(W_{\lambda}, \sol)$ of such estimators is given by 
\begin{equation}
 R(\lambda, \sol) = 
 \sum_{l\in \mathbb{N}} \left[ (1 - \lambda_l)^2 \vl^2 + \frac{\lambda_l^2}{\sigma_l^2}  \right].  \label{eq:biasVariance2}
\end{equation}
We will consider ellipsoids of the form 
 \begin{equation}\label{eq:ellipsoid}
  \Theta := \left\{ \sol\in\mathbb{X}: \sum_{l=1}^\infty a_l^2 \vl^2 \leq Q \right\}
 \end{equation}
with $Q>0$ and $a_l>0$. Then the risk  $R(\lambda, \Theta) :=  R(W_{\lambda}, \Theta)$ 
is given by 
\begin{equation}\label{eq:risk_pinsker}
R(\lambda,\Theta) = Q\sup_{l\in\mathbb{N}}\frac{(1-\lambda_l)^2}{a_l^2} 
+ \sum_{l=1}^{\infty}\frac{\lambda_l^2}{\sigma_l^2}. 
\end{equation}
\begin{lemma}\label{lem:diagonal_form}
Any minimax linear estimator must be of the diagonal form \Eq{diag_estimator}.
\end{lemma}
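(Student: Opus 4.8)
The plan is to work entirely in the diagonalised sequence-space model \Eq{pbPinsker} and to exploit the two symmetries of the problem: both the ellipsoid $\Theta$ in \Eq{ellipsoid} and the law of the white noise $(n_l)$ are invariant under arbitrary coordinate sign changes $\sol_l\mapsto\varepsilon_l\sol_l$, $\varepsilon_l\in\{-1,1\}$. First I would fix orthonormal bases $\{\varphi_l\}$ of $\mathbb{X}$ and an extension of $\{\psi_l\}$ to a full orthonormal basis $\{\psi_l\}\cup\{\psi_m^\perp\}$ of $\mathbb{Y}$, where the $\psi_m^\perp$ span $\overline{\mathrm{span}}\{\psi_l\}^\perp$, i.e.\ the directions of $\tilde\tau$ carrying only noise. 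Writing $\hat{\sol}_k=\langle W\tau,\varphi_k\rangle=\langle\tilde\tau,h_k\rangle$ with $h_k=\Cov[n]^{1/2}W^*\varphi_k$ and expanding $h_k$ in this basis represents any linear estimator as $\hat{\sol}_k=\sum_l b_{kl}y_l+\sum_m c_{km}z_m$, where $z_m=\langle\tilde n,\psi_m^\perp\rangle$. The $c$-terms have zero mean and are independent of the signal, so they only add the nonnegative variance $\sum_{k,m}c_{km}^2$; hence a minimax estimator must take $c=0$, and I may assume $\hat{\sol}_k=\sum_l b_{kl}y_l$. Since $R^L(\Theta)$ is finite (e.g.\ the zero estimator has maximal risk $Q/a_1^2$), finiteness of the risk forces $B=(b_{kl})$ to be Hilbert--Schmidt, and the decomposition \Eq{biasVariance} gives
\[
R(B,\sol)=\sum_k\Big[\big(\textstyle\sum_l b_{kl}\sigma_l \sol_l-\sol_k\big)^2+\sum_l b_{kl}^2\Big].
\]

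Next I would introduce, for a sign sequence $\varepsilon\in\{-1,1\}^{\mathbb N}$, the conjugated estimator $B^\varepsilon$ with entries $b^\varepsilon_{kl}:=\varepsilon_k\varepsilon_l b_{kl}$. A one-line substitution in the formula above shows $R(B^\varepsilon,\sol)=R(B,S_\varepsilon\sol)$, where $(S_\varepsilon\sol)_l=\varepsilon_l\sol_l$; since $S_\varepsilon$ maps $\Theta$ onto itself, every conjugate has the same maximal risk, $R(B^\varepsilon,\Theta)=R(B,\Theta)$. I would then average over $\varepsilon$ with respect to the product of Rademacher measures on $\{-1,1\}^{\mathbb N}$. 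Because $\mathbb{E}_\varepsilon[\varepsilon_k\varepsilon_l]=\delta_{kl}$, the averaged operator $\bar B:=\mathbb{E}_\varepsilon[B^\varepsilon]$ is diagonal, $\bar b_{kl}=\delta_{kl}b_{kk}$, which is precisely the form \Eq{diag_estimator} with $\lambda_k:=\sigma_k b_{kk}$.

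To upgrade this from existence of a diagonal optimum to the stated necessity, I would compute the pointwise gap exactly. A direct evaluation of $\mathbb{E}_\varepsilon R(B^\varepsilon,\sol)$ and of $R(\bar B,\sol)$ yields
\[
\mathbb{E}_\varepsilon R(B^\varepsilon,\sol)-R(\bar B,\sol)=\sum_{k\neq l}b_{kl}^2\big(1+\sigma_l^2\sol_l^2\big)\ \ge\ \sum_{k\neq l}b_{kl}^2=:\delta,
\]
uniformly in $\sol$. Taking the supremum over $\sol\in\Theta$, using $\sup_\sol\mathbb{E}_\varepsilon(\cdot)\le\mathbb{E}_\varepsilon\sup_\sol(\cdot)$ and the invariance $\mathbb{E}_\varepsilon R(B^\varepsilon,\Theta)=R(B,\Theta)$, gives $R(\bar B,\Theta)+\delta\le R(B,\Theta)$. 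Thus a non-diagonal $B$ (for which $\delta>0$) is strictly beaten by the diagonal $\bar B$; if $B$ is minimax, then $\delta=0$, forcing all off-diagonal $b_{kl}$ to vanish. Together with the earlier elimination of the noise-only part ($c=0$) this is exactly the claim.

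The algebra behind the two displayed identities is routine; the main obstacle is to justify the infinite-dimensional and measure-theoretic steps cleanly. Specifically I must argue that Hilbert--Schmidtness of $B$ (which finite maximal risk guarantees) legitimises the Rademacher average, the interchanges $\mathbb{E}_\varepsilon\sum=\sum\mathbb{E}_\varepsilon$, and Jensen's inequality $R(\bar B,\sol)\le\mathbb{E}_\varepsilon R(B^\varepsilon,\sol)$ (convexity of $R(\cdot,\sol)$ in the matrix entries is immediate, each summand being a sum of squares), and that the reduction of a general bounded linear estimator to the matrix $B$ plus discardable noise-only directions is valid. The inequality $\sup_\sol\mathbb{E}_\varepsilon\le\mathbb{E}_\varepsilon\sup_\sol$ used in the last step is the elementary direction of the minimax inequality and needs no further hypotheses.
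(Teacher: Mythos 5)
Your proof is correct, but it takes a genuinely different route from the paper's. The paper's argument is a direct two-line comparison: because $a_l\to\infty$, the maximal risk of a \emph{diagonal} estimator over $\Theta$ is attained at a single axis point $v_{\diag(W)}=(\sqrt{Q}/a_{l})\varphi_{l}$, and at that one test point the bias of a non-diagonal $W$ can only exceed that of its diagonal part while its variance is strictly larger, whence $R(\diag(W),\Theta)=R(\diag(W),\{v_{\diag(W)}\})<R(W,\{v_{\diag(W)}\})\le R(W,\Theta)$. You instead run the classical symmetrization argument for orthosymmetric sets: sign-flip invariance of the ellipsoid and of the white-noise law, conjugation $b_{kl}\mapsto\varepsilon_k\varepsilon_l b_{kl}$ preserving the maximal risk, and Rademacher averaging projecting $B$ onto its diagonal; your exact gap identity $\mathbb{E}_\varepsilon R(B^\varepsilon,\sol)-R(\bar B,\sol)=\sum_{k\ne l}b_{kl}^2(1+\sigma_l^2\sol_l^2)$ then yields strict improvement unless $B$ is already diagonal. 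The paper's proof is shorter and needs only one test point; yours is more systematic, produces a quantitative gap $\delta=\sum_{k\ne l}b_{kl}^2$, and --- a genuine plus --- explicitly reduces a general bounded linear estimator on $\mathbb{Y}$ to a Hilbert--Schmidt matrix acting on the coordinates $y_l$, discarding the noise-only directions orthogonal to $\overline{\mathrm{span}}\{\psi_l\}$ (your $c$-terms), a step the paper's proof passes over silently. One trivial slip: the maximal risk of the zero estimator is $Q/\min_l a_l^2$ rather than $Q/a_1^2$ unless $(a_l)$ is non-decreasing; this does not affect the argument.
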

\begin{proof} 
Note that since $a_l\to\infty$, the supremum in \Eq{risk_pinsker} is attained at some index 
$l_{\lambda}\in\mathbb{N}$, and $R(\lambda,\Theta) = R(\lambda,\{v_{\lambda}\})$ with 
$v_{\lambda}=(\sqrt{Q}/a_{l_{\lambda}})\varphi_{l_{\lambda}}\in\Theta$. 
If a linear estimator $W$ with a nondiagonal 
(infinite) matrix representation is replaced by its diagonal part $\diag(W)$, 
the bias part of $R(W,\{v_{\diag(W)}\})$ cannot increase and the variance part 
strictly decreases. Hence, 
\[
R(\diag(W),\Theta)=R(\diag(W),\{v_{\diag(W)}\})<R(W,\{v_{\diag(W)}\})
\leq R(W,\Theta),
\]
which shows that $W$ is not minimax. 
\end{proof}

Even though the following result is well-known, we would like to present a short proof 
since we consider it more instructive and simpler than other proofs, e.g.\ in 
\cite{BL:95,PIN80,TSY09} (all for the equivalent regression problems version of the theorem). 
In particular, we derive the formulas \Eq{pinsker_weights} and \Eq{pinskerCte} and not 
just verify them, and it becomes apparent that 
$\overline{\kappa}\sqrt{Q}$ is the bound on the bias.  

\begin{theorem}[Pinsker estimator] \label{th:pinsker}
Consider a sequence $(a_l)_{l\in\mathbb{N}}$ such that $a_l > 0$ and 
$\lim_{l \rightarrow \infty} a_l = \infty$, and an ellipsoid of the form \Eq{ellipsoid}
with $Q>0$. 
Then there exists a unique minimax linear estimator on $\Theta$.  
It is of the form \eref{eq:diag_estimator}, and its weights are given by 
\begin{equation}\label{eq:pinsker_weights}
 \overline{\lambda}_l =\max( 1 - \overline{\kappa} a_l, 0), 
\end{equation}
 where the constant $\overline{\kappa} > 0$ is the unique solution of the equation
\begin{equation}
 \kappa Q - \sum_{l=1}^\infty \frac{a_l}{\sigma_l^2} \max(1 - \kappa a_l, 0) = 0. \label{eq:pinskerCte}
\end{equation}
The minimax linear risk is given by
\( R^L(\Theta) = \sum_{l=1}^\infty \frac{1}{\sigma_l^2} \max(1 - \overline{\kappa} a_l, 0).
\)
\end{theorem}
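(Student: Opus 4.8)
The plan is to build on \Lem{diagonal_form}, which already reduces the problem to minimising the explicit maximal risk \Eq{risk_pinsker} over all weight sequences $(\lambda_l)$. The only real difficulty is the coupling introduced by the supremum $\sup_l (1-\lambda_l)^2/a_l^2$ in the bias term, which prevents the coordinates from being optimised independently. I would break this coupling by \emph{fixing the bias level first}: for a scalar parameter $\kappa \geq 0$, restrict attention to weights satisfying $|1-\lambda_l| \leq \kappa a_l$ for every $l$, so that the bias part of \Eq{risk_pinsker} is at most $Q\kappa^2$. Under this constraint the variance $\sum_l \lambda_l^2/\sigma_l^2$ separates into independent one-dimensional problems, and minimising $\lambda_l^2$ over the interval $[1-\kappa a_l,\,1+\kappa a_l]$ gives the unique minimiser $\lambda_l = \max(1-\kappa a_l,0)$, which is precisely the Pinsker form \Eq{pinsker_weights}. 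This reduction of an infinite-dimensional minimisation to a scalar one is the conceptual heart of the argument and the step I expect to be the main obstacle.

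Second, I would show this reduction loses nothing. Writing $\Phi(\kappa) := Q\kappa^2 + \sum_l \sigma_l^{-2}\max(1-\kappa a_l,0)^2$, the weights above achieve $R(\lambda,\Theta) \leq \Phi(\kappa)$. Conversely, given an arbitrary diagonal $\lambda$, set $\kappa_\lambda := \sup_l |1-\lambda_l|/a_l$; its variance is then no smaller than that of the coordinate-wise minimiser at level $\kappa_\lambda$, so $R(\lambda,\Theta) \geq \Phi(\kappa_\lambda) \geq \inf_\kappa \Phi(\kappa)$. Combined with \Lem{diagonal_form}, this yields $R^L(\Theta) = \inf_{\kappa \geq 0}\Phi(\kappa)$.

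Third, I would minimise the scalar function $\Phi$. Because $a_l \to \infty$, for each fixed $\kappa > 0$ only finitely many summands are nonzero, so $\Phi$ is finite; since each $\max(1-\kappa a_l,0)^2$ is convex and $C^1$ in $\kappa$, the function $\Phi$ is $C^1$, strictly convex, and coercive. Its derivative is $\Phi'(\kappa) = 2\bigl(\kappa Q - \sum_l a_l\sigma_l^{-2}\max(1-\kappa a_l,0)\bigr)$, so the stationarity condition $\Phi'=0$ is exactly \Eq{pinskerCte}. The bracketed function is strictly increasing in $\kappa$ (each term $\max(1-\kappa a_l,0)$ is non-increasing), negative as $\kappa \to 0^+$ and positive for large $\kappa$, hence it has a unique positive zero $\overline{\kappa}$. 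Strict convexity makes $\overline{\kappa}$ the unique minimiser, and the coordinate-wise uniqueness from the first step makes $\overline{\lambda}_l = \max(1-\overline{\kappa}a_l,0)$ the unique minimax linear estimator.

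Finally, to recover the stated risk formula I would substitute $\overline{\kappa}$ into $\Phi$ and eliminate the quadratic term via \Eq{pinskerCte}: multiplying that identity by $\overline{\kappa}$ gives $Q\overline{\kappa}^2 = \sum_l \overline{\kappa}a_l\sigma_l^{-2}\max(1-\overline{\kappa}a_l,0)$, and on each index with $1-\overline{\kappa}a_l>0$ the elementary identity $\overline{\kappa}a_l(1-\overline{\kappa}a_l) + (1-\overline{\kappa}a_l)^2 = 1-\overline{\kappa}a_l$ collapses $\Phi(\overline{\kappa})$ to $\sum_l \sigma_l^{-2}\max(1-\overline{\kappa}a_l,0)$, as claimed. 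The residual points, namely convergence and continuity of the relevant sums, all follow from $a_l \to \infty$ and are routine.
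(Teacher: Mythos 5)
Your proposal is correct and follows essentially the same route as the paper's proof: stratifying the weight sequences by the bias level $\kappa=\sup_l|1-\lambda_l|/a_l$, minimising coordinate-wise to get $\lambda_l=\max(1-\kappa a_l,0)$, and then minimising the resulting strictly convex scalar function $\varphi(\kappa)=Q\kappa^2+\sum_l\sigma_l^{-2}\max(1-\kappa a_l,0)^2$, whose stationarity condition is exactly \Eq{pinskerCte}. The only (harmless) differences are cosmetic: you work with the inequality constraint $|1-\lambda_l|\le\kappa a_l$ and a two-sided sandwich rather than the paper's equality-level sets $\Lambda_\kappa$, and you additionally write out the algebraic collapse of $\varphi(\overline{\kappa})$ to the stated risk formula, which the paper leaves implicit.
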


\begin{proof}
The infimum of $R(\lambda,\Theta)$ over all sequences $\lambda$ can be reduced to the set 
$\Lambda:= \{\lambda\in l^2(\mathbb{N}): \|\lambda\|_{\infty}\leq 1\}$ since 
$R(\lambda,\Theta)=\infty$ if $\lambda\notin l^2(\mathbb{N})$ 
and $R(\lambda,\Theta)$ strictly decreases 
if some $\lambda_j\notin [-1,1]$ is replaced by its metric projection onto $[-1,1]$. 
Let us introduce the decomposition 
\[
\Lambda = \bigcup_{0<\kappa\leq 1/\underline{a}} \Lambda_\kappa 
\qquad\mbox{with}\qquad 
\Lambda_\kappa:= \left\{\lambda\in\Lambda: \sup_{j\in\mathbb{N}}\frac{|1-\lambda_j|}{a_j}=\kappa
\right\}
\]
with $\underline{a}:=\min_{j}a_j$. 
 In view of \Eq{risk_pinsker} we have 
$R(\lambda,\Theta) = \kappa^2Q+ \sum_{l=1}^\infty(\lambda_l/\sigma_l)^2$ 
for $\lambda\in \Lambda_\kappa$, so the infimum over
$\lambda\in \Lambda_\kappa$ is attained if and only if 
$\lambda_l= \argmin_{|1-x|\leq\kappa a_l}x^2=\max(1-\kappa a_l,0)$ for all $l\in\mathbb{N}$. Note that this is \Eq{pinsker_weights} if $\kappa=\overline{\kappa}$. Using this formula for 
the minimizer we find that
\[
\inf_{\lambda\in\Lambda_{\kappa}}R(\lambda,\Theta)= \varphi(\kappa)
\qquad \mbox{with}\qquad 
\varphi(\kappa):=\kappa^2Q+\sum_{l=1}^{\infty}\frac{\max(1-\kappa a_l,0)^2}{\sigma_l^2}\,.
\]
Therefore $\inf_{\lambda\in\Lambda} R(\lambda,\Theta)
= \inf_{0< \kappa\leq1/\underline{a}}\varphi(\kappa)$.  
Note that $\varphi$ is strictly convex and differentiable with 
$\varphi'(\kappa)$ given by the left hand side of 
\Eq{pinskerCte} since the sum is finite in a neighborhood of 
any $\kappa>0$. 
Moreover, $\lim_{\kappa\searrow 0}\varphi(\kappa)=\infty$ 
and $\varphi'(1/\underline{a})= Q/\underline{a}>0$. Therefore,  $\varphi$ attains 
its infimum on $(0,1/\underline{a}]$ at the unique solution $\bar{\kappa}$ 
to $\varphi'(\kappa)=0$. 
\end{proof}

Instead of the implicit equation \Eq{pinskerCte} for $\overline{\kappa}$ there is also 
the following explicit formula if the sequence $(a_l)$ is non-decreasing 
(see \cite{TSY09}): 
\[
\fl
  \overline{\kappa} = \frac{\sum_{j=1}^N \frac{a_j}{\sigma_j^2}}{Q + \sum_{j=1}^N \frac{a_j^2}{\sigma_j^2}}
\qquad \mbox{with}\qquad 
N := \max\left\{n\in \mathbb{N}:
\sum_{l=1}^n \frac{1}{\sigma_l^2}a_l (a_n - a_l)<Q\right\}.
\]
From a practical point of view, this formula is only useful if $Q$ is known exactly. 
But this is a rather unrealistic assumption.  $Q$ should rather be seen as a regularization 
parameter. But since there is a one-to-one correspondence between $Q$ and $\overline{\kappa}$ 
via \Eq{pinskerCte}, it is much simpler to consider $\overline{\kappa}$ as regularization parameter. The choice of regularization parameters is an important and well-studied problem, 
but since the focus of this paper is on the comparison of regularization methods, we do not further discuss it here. 

A comparison of the linear minimax risk $R^L$ with the nonlinear one was given in \cite{PIN80}. Under the additional assumptions that the noise is Gaussian and that
\begin{equation}
 \sup_{j \in \mathbb{N}} \frac{\sum_{j=1}^J \sigma_j^2}{\sup_{j \leq J} \sigma_j^2} < \infty, \label{eq:hypPinsker}
\end{equation}
then $R^L(\Theta) \sim R^N(\Theta)$ as the noise level tends to 0. 
Assumption \Eq{hypPinsker} was later relaxed to $\sup_j \sigma_j / \sigma_{j+1} < \infty$ 
\cite{GOL99}. This assumption is very plausible in the context of our problem. 

It remains to discuss the choice of the ellipsoid $\Theta$. Without depth inversion, i.e.\ for $N_z=1$ and a scalar physical 
quantity, it is natural to define $\Theta$ in terms of some bound on the power spectrum of the form 
\[
\sum_{\kv\in\mathbb{Z}^2} \gamma(\kv) |\bsol_\kv|^2\leq Q.
\]
E.g.\ for the choice $\gamma(\kv) = (1+|\kv|^2)^s$ the ellipsoids $\Theta$ are balls in the periodic Sobolev $H^s([-\pi,\pi]^2)$. 
In depth direction admissible choices of $\Theta$  are more difficult to interpret 
since the axes of the ellipsoid must coincide with the singular vectors of the 
forward operators. 

\begin{figure}%
\hspace*{-6ex}
\includegraphics[width=1.2\columnwidth]{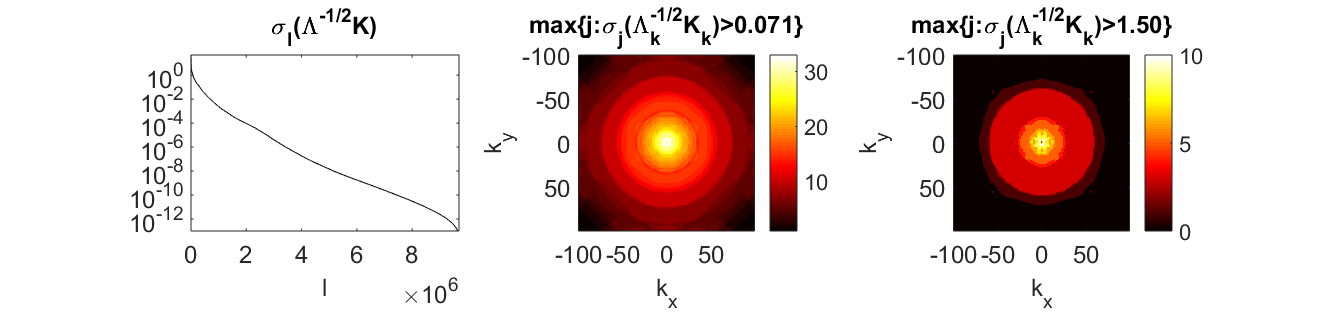}%
\caption{Left panel: Singular values of the forward operator with whitening. 
Note the exponential decay of the singular values. 
Middle and right panel: Distribution of singular values in frequency space. 
The color at each spatial frequency $\kv=(k_x,k_y)$ represents the number of 
singular values $> 0.071$ and $>1.5$ (with $\sigma_1=41.6$). 
These thresholds correspond to positive Pinsker weights and 
Pinsker weights $\overline{\lambda}_l\geq 0.5$, respectively, in 
\Figs{vzReconstructedNoDiv}{vxReconstructed}.}%
\label{fig:sv}%
\end{figure}

We choose the weights $a_l$ such that $a_l^2$ grows asymptotically as the 
weights $\gamma(\kv)=\gamma(\kv(l))$ on the $L^2$-Fourier coefficients 
defining an $H^1(V)$-ball in a cuboid $V\subset\mathbb{R}^3$ 
as $l\to \infty$, i.e.,
\begin{equation}\label{eq:choice_ellipsoid}
a_l= l^{1/3}. 
\end{equation}
Here $\kv(l)$ denotes an enumeration of 
the three-dimensional spatial frequencies such that $|\kv(l)|$ is non-decreasing. 
Empirically, we observe that the singular 
values $\sigma_l$ of our forward operator decay exponentially, i.e.\ 
$\sigma_l\sim \exp(-\alpha-\beta l)$ for some $\alpha\in\mathbb{R}$ and $\beta>0$,  
and their ordering at least roughly corresponds to the ordering described 
by $\kv(l)$ (see \Fig{sv}). 

\section{Mass conservation constraint} \label{sect:mass}

In this section we discuss how the mass conservation constraint 
$\div(\rho \bsol)=0$ mentioned in \eref{eq:mass_conservation} 
can be incorporated into Tikhonov regularization and the Pinsker method. 
We will assume that $0<\rho_{\min}\leq \rho\leq\rho_{\max}<\infty$,  
that $\rho$ is smooth and depends only on $z$. Then the inverse problem can be formulated as 
\begin{equation}\label{eq:IP_mass_conservation}
K \mathbf{v} = \tau \quad \mbox{subject to }\div(\rho \mathbf{v}) = 0.
\end{equation}

In an abstract Hilbert space setting an equality constraint $B\bsol=0$ with a bounded 
linear operator $B:\mathbb{X}\to\mathbb{Z}$ does not change much since we can simply replace
$\mathbb{X}$ by the null-space $\Mc{N}(B)$ of $B$. However, as it is often inconvenient 
to explicitly construct a basis of $\Mc{N}(B)$, it is preferable to work in the larger 
space $\mathbb{X}$. 

E.g.\ statistical Tikhonov regularization with noise covariance operator $\Lambda$ and 
a (differential) operator $L$ mapping to a Hilbert space 
$\mathbb{V}$ applied to \eref{eq:IP_mass_conservation} reads
\[
\bsol_{\alpha}=\argmin_{B\bsol = 0}\left[\|\Lambda^{-1/2}K\bsol-\tau\|_{\mathbb{Y}}^2
+\alpha \|L\bsol\|_{\mathbb{V}}^2\right].
\]
To treat the side condition we consider the corresponding Lagrange function 
$\mathcal{\mathcal{L}}(\bsol,\mu):= \|\Lambda^{-1/2}K\bsol-\tau\|_{\mathbb{Y}}^2+
\alpha \|L\bsol\|_{\mathbb{V}}^2+\langle\mu,\alpha B\bsol\rangle_{\mathbb{Z}}$ with a 
Lagrange multiplier $\mu\in \mathbb{Z}$. Here $B$ has been multiplied by the regularization 
parameter $\alpha$ to improve the condition number of the 
optimality conditions $\diffq{\mathcal{L}}{\bsol}=0$ and 
$\diffq{\mathcal{L}}{\mu}=0$. These then  lead to the saddle point equation
\[
\left(\begin{array}{cc}
K^*\Lambda^{-1}K+\alpha L^*L & \alpha B^* \\
\alpha B & 0
\end{array}\right)
\left(\begin{array}{c}
\bsol_{\alpha} \\ \mu
\end{array}\right)
= \left(\begin{array}{c}
K^*\Lambda^{-1}\tau \\ 0
\end{array}\right).
\]

\subsection{Fully continuous setting}
In this subsection we discuss a continuous treatment of the depth variable $z$. 
If $V=[-\pi,\pi]^2\times[z_{N_z},z_{0}]$ is the domain of interest, we may choose 
\begin{eqnarray*}
\mathbb{X}&=&\left\{\bsol\in H^1(V)^3:\bsol(\cdot;z) \mbox{  periodic, }
\sol^{\mathrm{z}}(\cdot,z_0)=\sol^{\mathrm{z}}(\cdot,z_{N_z})=0
\right\}.
\end{eqnarray*}
This choice of boundary conditions rules out coronal mass ejections, which are very 
simple to detect and for which the 
Born approximation used in the derivation of the forward operator breaks down anyways. 

We equip $\Xspace$ with the norm 
$\|\bsol\|_{\mathbb{X}}:=\langle\rho\bsol,\rho\bsol\rangle_{H^1}^{1/2}$ where
\[
\langle \rho \bsol,\rho \bw \rangle_{H^1}:=
\left(\sum_{\beta\in\{\rm x,y,z\}}\langle\rho \sol^{\beta}, \rho w^{\beta}\rangle_{L^2(V)}+
\langle\grad \rho \sol^{\beta},\grad \rho w^{\beta}\rangle_{L^2(V)^3}\right)^{1/2}.
\]
Under our assumptions on $\rho$ the norms $\|\rho\bsol\|_{H^1}$ and $\|\bsol\|_{H^1}$ are 
equivalent, but since $\rho$ varies over several orders of magnitude, the incorporation 
of $\rho$ in the norm makes a significant difference. 

Let us introduce the operators $\grads_{\rho}\bsol:= \grad(\rho\bsol)$, 
$\curls_{\rho}\bsol:= \curl(\rho\bsol)$, and $\divs_{\rho}\bsol:= \div(\rho\bsol)$. 
The following lemma  summarizes the properties of the subspace 
$\Mc{N}(\divs_{\rho})\subset \mathbb{X}$ and will be proved in an appendix. 
\begin{lemma}\label{lem:helmholtz_cont}
\begin{enumerate}
\item\label{it:grad_curl_div}
For all $\bsol,\bw\in\mathbb{X}$ we have
\begin{equation}\label{eq:grad_curl_div}
\fl
\sum_{\beta\in\{\rm x,y,z\}}
\langle\grads_{\rho} \bsol^{\beta},\grads_{\rho} \bw^{\beta}\rangle_{L^2(V)^3}
= \langle \curls_{\rho}\bsol,\curls_{\rho} \bw\rangle_{L^2(V)^3}  
+ \langle \divs_{\rho} \bsol,\divs_{\rho} \bw\rangle_{L^2(V)}
\end{equation}
\item\label{it:norm_equivalence}
There exists a constant $c>0$ such that the inequalities 
\begin{equation}\label{eq:norm_equivalence}
\fl
c\|\rho\bsol\|_{H^1}^2\leq\|\curls_{\rho}\bsol\|_{L^2}^2 
+\frac{1}{|V|}\sum_{\beta\in\{{\rm x,y}\}}\left(\int_V \rho\sol^{\beta}\mathrm{d}(\mathbf{r},z)\right)^2
\leq \|\rho\bsol\|_{H^1}^2
\end{equation}
hold true for all $\bsol\in\mathbb{X}$ with $\divs_{\rho} \bsol =0$. 
\item\label{it:ortho_cont} 
$\mathbb{X}_{\diamond}:=\{\bsol\in\mathbb{X}:\int_V\rho\sol^{\rm x}d(\mathbf{r},x) = 
\int_V\rho\sol^{\rm y}d(\mathbf{r},x)=0\}$ has the Helmholtz decomposition
\[
\mathbb{X}_{\diamond} = \mathcal{N}_{\diamond}(\divs_{\rho})\oplus\mathcal{N}_{\diamond}(\curls_{\rho})
\]
with $\mathcal{N}_{\diamond}(\divs_{\rho}):=\{\bsol\in\mathbb{X}_{\diamond}:\divs_{\rho}\bsol=0\}$ 
and $\mathcal{N}_{\diamond}(\curls_{\rho}):=\{\bsol\in\mathbb{X}_{\diamond}:\curls_{\rho}\bsol=0\}$. These subspaces 
are orthogonal both with respect to the $\Xspace$ inner product and the inner product 
$\lsp \rho \bsol,\rho \bw\rsp_{L^2(V)}$. 
\end{enumerate}
\end{lemma}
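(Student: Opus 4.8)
The unifying device is to pass to the unweighted fields $\bu:=\rho\bsol$ and $\mathbf b:=\rho\bw$, which turn $\grads_{\rho},\curls_{\rho},\divs_{\rho}$ into the ordinary operators $\grads,\curls,\divs$ acting on $\bu,\mathbf b$. As $\rho=\rho(z)$ is smooth with $0<\rho_{\min}\le\rho\le\rho_{\max}$, the map $\bsol\mapsto\bu$ is a bounded isomorphism of $\Xspace$ onto $\{\bu\in H^1(V)^3:\bu\ \mbox{horizontally periodic},\ u^{\rm z}=0\ \mbox{on}\ z\in\{z_0,z_{N_z}\}\}$, and it sends the constraints defining $\Xspace_\diamond$ to $\int_V u^{\rm x}=\int_V u^{\rm y}=0$. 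I would prove all three items for $\bu$ and translate back. For item~\ref{it:grad_curl_div} the plan is to use the pointwise ``null Lagrangian'' identity
\[
\sum_{\beta}\nabla u^{\beta}\cdot\nabla b^{\beta}-\curl\bu\cdot\curl\mathbf b-\div\bu\,\div\mathbf b=\div\mathbf F,\qquad \mathbf F:=(\bu\cdot\nabla)\mathbf b-\bu\,\div\mathbf b,
\]
which comes from $\epsilon_{ijk}\epsilon_{ilm}=\delta_{jl}\delta_{km}-\delta_{jm}\delta_{kl}$ together with the cancellation of the two second-order terms in $\div\mathbf F$. Integrating over $V$ reduces \eref{eq:grad_curl_div} to $\int_{\partial V}\mathbf F\cdot\boldsymbol\nu=0$: on horizontally periodic faces the fluxes from opposite faces cancel, and on the top and bottom faces $\boldsymbol\nu=\pm\mathbf e_{\rm z}$, where $u^{\rm z}=b^{\rm z}=0$ forces both $\bu\cdot\boldsymbol\nu=0$ and the vanishing of the tangential derivatives of $b^{\rm z}$, so that $\mathbf F\cdot\boldsymbol\nu=0$.

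For item~\ref{it:norm_equivalence}, once $\divs_{\rho}\bsol=0$, the identity \eref{eq:grad_curl_div} with $\bw=\bsol$ gives $\sum_\beta\|\nabla u^{\beta}\|_{L^2}^2=\|\curl\bu\|_{L^2}^2$, hence $\|\rho\bsol\|_{H^1}^2=\|\bu\|_{L^2}^2+\|\curl\bu\|_{L^2}^2$. The upper bound in \eref{eq:norm_equivalence} is then immediate from $\tfrac1{|V|}(\int_V u^{\beta})^2\le\|u^{\beta}\|_{L^2}^2$ (Cauchy--Schwarz). For the lower bound I would bound $\|\bu\|_{L^2}^2$ by a componentwise Poincar\'e argument: for $\beta\in\{{\rm x,y}\}$ split $u^{\beta}$ into its mean over $V$ and a mean-zero remainder, controlling the remainder by $\|\nabla u^{\beta}\|_{L^2}^2$ through the Poincar\'e--Wirtinger inequality; for $\beta={\rm z}$ use instead the one-dimensional Friedrichs inequality in $z$ supplied by $u^{\rm z}=0$ on the top and bottom, so that no mean term is needed in the $\rm z$ component. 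Summing and invoking $\sum_\beta\|\nabla u^{\beta}\|^2=\|\curl\bu\|^2$ once more produces the constant $c$.

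For item~\ref{it:ortho_cont} I would use the classical Helmholtz construction via a Neumann solve. Given $\bu\in\rho\,\Xspace_\diamond$, let $\nabla\phi$ be the $L^2$-orthogonal projection of $\bu$ onto periodic gradients, i.e.\ solve $\int_V\nabla\phi\cdot\nabla\psi=\int_V\bu\cdot\nabla\psi$ for all periodic $\psi$; this is uniquely solvable (modulo constants) by Lax--Milgram and Poincar\'e--Wirtinger, and elliptic regularity gives $\phi\in H^2$. The natural boundary condition $\partial_{\nu}\phi=\bu\cdot\boldsymbol\nu$ yields $\partial_z\phi=0$ on top and bottom (as $u^{\rm z}=0$ there), while $\int_V\partial_x\phi=\int_V\partial_y\phi=0$ by periodicity, so $\nabla\phi\in\rho\,\Xspace_\diamond$ and $\curl\nabla\phi=0$. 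Setting $\bu_0:=\bu-\nabla\phi$ gives $\div\bu_0=0$ with $\bu_0\cdot\boldsymbol\nu=0$, hence $\bu_0\in\rho\,\Xspace_\diamond$, so $\bu=\nabla\phi+\bu_0$ exhibits the spanning. The $\langle\rho\cdot,\rho\cdot\rangle_{L^2}$-orthogonality of the two subspaces follows by integrating by parts, $\langle\bu_0,\nabla\phi\rangle_{L^2}=-\int_V(\div\bu_0)\phi+\int_{\partial V}(\bu_0\cdot\boldsymbol\nu)\phi=0$, and the $\Xspace$-orthogonality then follows from \eref{eq:grad_curl_div}, whose right-hand side vanishes because $\div\bu_0=0$ and $\curl\nabla\phi=0$; the same computation with $\bu_0=\nabla\phi$ shows the intersection is trivial, so the sum is direct.

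The step I expect to be the main obstacle is the $L^2$-orthogonality in item~\ref{it:ortho_cont}, because it hinges on the fact that \emph{every} curl-free field in $\rho\,\Xspace_\diamond$ is a periodic gradient, and $V\cong\mathbb T^2\times[z_{N_z},z_0]$ is not simply connected. Expanding a curl-free field in horizontal Fourier modes shows that the only curl-free fields that are not periodic gradients are the constant ${\rm x}$- and ${\rm y}$-directed fields, i.e.\ the two harmonic representatives of $H^1(\mathbb T^2)$; their presence is measured precisely by the horizontal periods $\oint u^{\rm x}\,\mathrm{d}x$ and $\oint u^{\rm y}\,\mathrm{d}y$, which for a curl-free field are proportional to $\int_V u^{\rm x}$ and $\int_V u^{\rm y}$. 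The zero-mean conditions defining $\Xspace_\diamond$ are exactly what annihilate these periods, and verifying this cohomological point, together with checking that the mixed periodic/Neumann solve stays inside $\Xspace_\diamond$ rather than merely in $H^1$, is where I expect the argument to require the most care.
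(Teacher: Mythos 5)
Your proposal is correct, and parts (\ref{it:grad_curl_div}) and (\ref{it:norm_equivalence}) follow essentially the same route as the paper: substitute $\bu=\rho\bsol$, reduce the identity to the vanishing of boundary terms (you package the integrations by parts into the null-Lagrangian divergence identity for $\mathbf F=(\bu\cdot\nabla)\mathbf b-\bu\div\mathbf b$, while the paper cancels the off-diagonal terms $\int\partial_\beta p^\beta\overline{\partial_\gamma q^\gamma}-\int\partial_\gamma p^\beta\overline{\partial_\beta q^\gamma}$ term by term; both need the same density argument to justify the second derivatives), and then use the componentwise Poincar\'e/Poincar\'e--Wirtinger inequalities exactly as in the paper. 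Where you genuinely diverge is part (\ref{it:ortho_cont}). The paper proves completeness of the decomposition by duality: assuming $\bsol\in\Xspace_\diamond$ is $L^2_\rho$-orthogonal to both nullspaces, it tests against $\rho^{-1}\curl\bg$ and $\rho^{-1}\grad f$ to conclude $\curls_\rho\bsol=\divs_\rho\bsol=0$, hence $\bsol=0$ by part (\ref{it:norm_equivalence}), and then invokes closedness plus orthogonality of the two subspaces to upgrade density of the sum to equality. You instead construct the decomposition via a mixed periodic/Neumann solve for a potential $\phi$, which is also valid but costs you an elliptic-regularity step ($\phi\in H^2$, needed so that $\nabla\phi$ lands back in $H^1$) that the paper's duality argument avoids entirely; in exchange you get an explicit formula for the projection. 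Both arguments share the one genuinely delicate point, which you correctly isolate: since $V\cong\mathbb T^2\times[z_{N_z},z_0]$ is not simply connected, ``curl-free implies gradient'' fails by exactly the two harmonic horizontal constants, and the zero-mean conditions defining $\Xspace_\diamond$ are precisely what kill the corresponding periods. The paper hides this behind a citation to a potential theorem, so your explicit cohomological verification is, if anything, more complete on this step; I see no gap in your argument.
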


We will choose $L\bsol:=\curl(\rho\bsol)$. This means we do \emph{not} incorporate the means of 
the horizontal velicity components into the penalty term, which are needed to obtain a norm on 
$\{\bsol\in\mathbb{X}:\divs_\rho\bsol=0\}$. 
This is justified as the data are sensitive to constant horizontal velocities, i.e.\ 
$K$ restricted to ${\rm span}\{(1/\rho,0,0),(0,1/\rho,0)\}$ is bounded from below 
(see \cite[\S 8.2]{EHN:96}).

\subsection{(Semi-) discrete approximation}
In this subsection we discuss a discrete approximation of the $z$-variable which inherits 
the essential properties of the continuous setting. 
We found this crucial for good numerical results. 
Since $\rho$ depends only on $z$, the constraint $\div(\rho\bsol)=0$ separates into 
\[
ik_{\mathrm{x}} \rho \sol^{\mathrm{x}}_\kv + ik_{\mathrm{y}} \rho \sol^{\mathrm{y}}_\kv 
+ \diffq{\rho \sol^{\mathrm{z}}_\kv}{z} = 0,
\qquad \kv=(k_{\mathrm{x}},k_{\mathrm{y}})\in\mathbb{Z}^2. 
\]
Hence the only difference between a continuous and a discrete treatment of the 
(periodic) horizontal variables $x$ and $y$ is that in the former case infinitely many 
spatial frequencies must be considered, and in the latter case only finitely many. 


It will be essential to use different grids for the horizontal and the vertical velocities 
to preserve the most important properties of the continuous setting as summarized in 
\Lem{helmholtz_cont} in the discrete setting. 
For a given grid $z_0>z_1>\dots>z_{N_z}$ in vertical direction we introduce the midpoints 
$z_{j+1/2}:= \frac{1}{2}(z_j+z_{j+1})$. The horizontal velocities will be represented on 
$\{z_{1/2},\dots, z_{N_z-1/2} \}$ whereas the vertical velocities will be represented by their values on 
$\{z_1,\dots,z_{n-1}\}$. Here the points $z_0$ and $z_{N_z}$ have been omitted due to the 
Dirichlet boundary conditions for $\sol^{\rm z}$ such that 
\[
\sol^{\rm x}_\kv,\sol^{\rm y}_\kv\in \mathbb{V}:=\mathbb{C}^{N_z},\qquad 
\sol^{\rm z}_\kv \in \mathbb{W}:=\mathbb{C}^{N_z-1}. 
\]
These quantities will be indexed by $\sol^{\rm z}_{\kv} = (\sol^{\mathrm{z}}_{\kv,1},\dots,
\sol^{\mathrm{z}}_{\kv,N_z-1})^{\top}$ and 
$\sol^{\beta}_{\kv} = (\sol^{\beta}_{\kv,1/2},\dots,\sol^{\beta}_{\kv,N_z-1/2})^{\top}$, $\beta = x,y$. 
To define inner products on $\mathbb{V}$ and $\mathbb{W}$ 
we introduce weights $\delta_j:=z_{j-1/2}-z_{j+1/2}$ for $j=1,\dots, N_z-1$ and 
$\delta_{j+1/2}:=z_{j}-z_{j+1}$ for $j=0,\dots,N_z-1$. Then we introduce Gram matrices 
\begin{equation}\label{eq:GV_GW}
G_{\mathbb{V}}:=\diag(\delta_{1/2},\dots,\delta_{N_z-1/2})\quad \mbox{and}\quad
G_{\mathbb{W}}:=\diag(\delta_1,\dots,\delta_{N_z-1})
\end{equation}
 defining inner products 
$\langle v_1,v_2\rangle_{\mathbb{V}}:=v_2^{*}G_\mathbb{V}v_1$ on $\mathbb{V}$ and 
$\langle v_1,v_2\rangle_{\mathbb{W}}:=v_2^{*}G_\mathbb{W}v_1$ on $\mathbb{W}$. 
Similarly, we define $\rho_{j}:=\rho(z_j)$ for $j\in\{0,1/2,1,\dots,N_z\}$ 
and the matrices $M_{\rho}^\mathbb{V} = \diag(\rho_{1/2}, \ldots \rho_{N_z-1/2})$ and $M_{\rho}^\mathbb{W} = \diag(\rho_{1}, \ldots \rho_{N_z-1})$.
We approximate derivatives by the finite differences 
\[
\fl
\diffq{ \sol^{z}_\kv}{z}(z_{j+1/2}) \approx 
\frac{\sol^{{\rm z}}_{\kv,j}- \sol^{{\rm z}}_{\kv,j+1}}{\delta_{j+1/2}} 
= (D_{z}^{\mathbb{W}}\sol^{\mathrm{z}}_\kv)_j,\qquad 
\diffq{\sol^{\beta}_\kv}{z}(z_j) \approx
\frac{\sol^{\beta}_{\kv,j-1/2}- \sol^{\beta}_{\kv,j+1/2}}{\delta_j}
=(D_{z}^{\mathbb{V}}\sol^{\beta}_\kv)_j
\]
for $\beta={\rm x},{\rm y}$  
with $D_{z}^{\mathbb{W}}\in \mathbb{C}^{N_z\times (N_z-1)}\doteq L(\mathbb{W},\mathbb{V})$ 
and $D_{z}^{\mathbb{V}}\in \mathbb{C}^{(N_z-1)\times N_z}\doteq L(\mathbb{V},\mathbb{W})$ given by 
\begin{eqnarray*}
\fl 
D_{z}^{\mathbb{V}} &:=& G_{\mathbb{W}}^{-1}\left(\begin{array}{cccc}
1 & -1 \\
& \ddots &\ddots \\
& & 1 &-1
\end{array}\right)\quad \mbox{and}\quad 
D_{z}^{\mathbb{W}} := G_{\mathbb{V}}^{-1}
\left(\begin{array}{cccc}
-1 \\
1 & -1 \\
& \ddots &\ddots \\
& & 1 & -1 \\
& &   &1
\end{array}\right).
\end{eqnarray*}
These matrices are skew-adjoint with respect to the inner products in $\mathbb{V}$ and $\mathbb{W}$ since 
$G_{\mathbb{V}}D_{z}^{\mathbb{W}} = -(G_{\mathbb{W}}D_{z}^{\mathbb{V}})^*$, 
and hence
\begin{equation}\label{eq:adjointnessD}\fl
\langle D_{z}^{\mathbb{W}}w,v\rangle_{\mathbb{V}} 
= v^*G_{\mathbb{V}}D_{z}^{\mathbb{W}}w 
= -v^*(G_{\mathbb{W}}D_{z}^{\mathbb{V}})^*w
= -(D_{z}^{\mathbb{V}}v)^*G_{\mathbb{W}}w = -\langle w, D_{z}^{\mathbb{V}}v\rangle_{\mathbb{W}}. 
\end{equation}
Now we introduce the following approximations to the $\div$, $\grad$, $\curl$, and $\curl^*$  
for the spatial frequency $\kv\in\mathbb{Z}^2$:
\begin{eqnarray*}
&\divs_{\kv}\;:= \left(\begin{array}{ccc}
ik_{\rm x}I^{\mathbb{V}}\;\;\;\; & ik_{\rm y} I^{\mathbb{V}}\;\;\;\;&  D_{z}^{\mathbb{W}} \end{array}\right): 
\mathbb{V}\times \mathbb{V}\times \mathbb{W}\to \mathbb{V},\\
&\grads_{\kv}\;:=\left(\begin{array}{ccc}
ik_{\rm x}I^{\mathbb{V}}\;\;\;\; & ik_{\rm y} I^{\mathbb{V}}\;\;\;\;&  (D_{z}^{\mathbb{V}})^{\top} \end{array}\right)^{\top}: 
\mathbb{V}\to \mathbb{V}\times \mathbb{V}\times \mathbb{W}\\
&\curls_{\kv}:= \left( \begin{array}{ccc}
                   0 & -D_{z}^{\mathbb{V}} & ik_{\rm y} I^{\mathbb{W}} \\
                   D_{z}^{\mathbb{V}} & 0 & -ik_{\rm x} I^{\mathbb{W}} \\
                   -ik_{\rm y} I^{\mathbb{V}} & ik_{\rm x} I^{\mathbb{V}} & 0
                  \end{array} \right):
									\mathbb{V}\times \mathbb{V}\times \mathbb{W}\to \mathbb{W}\times \mathbb{W}\times\mathbb{V},\\
&\curls_{\kv}^{\#} := \left( \begin{array}{ccc}
                   0 & -D_{z}^{\mathbb{W}} & ik_{\rm y} I^{\mathbb{V}} \\
                   D_{z}^{\mathbb{W}} & 0 & -ik_{\rm x} I^{\mathbb{V}} \\
                   -ik_{\rm y} I^{\mathbb{W}} & ik_{\rm x} I^{\mathbb{W}} & 0
                  \end{array} \right):
				\mathbb{W}\times \mathbb{W}\times \mathbb{V}\to \mathbb{V}\times \mathbb{V}\times\mathbb{W}.
\end{eqnarray*}
Let us introduce the spaces 
$\mathbb{X}_{\kv}:= \mathbb{V}\times \mathbb{V}\times \mathbb{W}$ and 
$\mathbb{Y}_{\kv}:=\mathbb{W}\times \mathbb{W}\times\mathbb{V}$,  
the multiplication operator 
$M_{\rho}^{\Xspace}:= \blockdiag\Big(M_{\rho}^{\mathbb{V}},\,M_{\rho}^{\mathbb{V}},\,
M_{\rho}^{\mathbb{W}}\Big)$, and the mappings 
\begin{eqnarray}
\label{eq:divrho_curlrho}
&&\divi{\kv}:= \divs_{\kv} M_{\rho}^{\Xspace},\qquad\qquad 
\curli{\kv}:=  \curls_{\kv}M_{\rho}^{\Xspace},\\ 
&&\curli{\kv}^{\#}:= (M_{\rho}^{\Xspace})^{-1}\curls_{\kv}^{\#},\qquad\qquad
\gradi{\kv}:= (M_{\rho}^{\Xspace})^{-1}\grads_{\kv}.\nonumber
\end{eqnarray}
The Gram matrices in $\Xspace$ and $\Yspace$ are
\begin{equation}\label{eq:GX_GY}
G_{\Xspace}:=(M_{\rho}^{\Xspace})^2\blockdiag\big(G_{\mathbb{V}},\,G_{\mathbb{V}},\,G_{\mathbb{W}}\big)
\quad\mbox{and}\quad 
G_{\Yspace}:=\blockdiag\big(G_{\mathbb{W}},\,G_{\mathbb{W}},\,G_{\mathbb{V}}\big). 
\end{equation}
These matrices have the following properties: 

\begin{lemma}\label{lemm:prop_discr_DO}
\begin{enumerate}
\item\label{it:divcurl} $\divi{\kv}\curli{\kv}^{\#} = \divs_{\kv}\curls_{\kv}=0$
and $\curli{\kv}\gradi{\kv}=\curls_{\kv}\grads_{\kv}= 0$. 
\item\label{it:NDz} $\Mc{N}(D_{z}^{\mathbb{W}})=\{0\}$
\item\label{it:curl_adj} $\curli{\kv}^{\#}$ is the adjoint of $\curli{\kv}$ with respect to 
the Gram matrices $G_{\Xspace}$ and $G_{\Yspace}$, i.e.\ 
$G_{\Xspace}\curli{\kv}^{\#}= (G_{\Yspace}\curli{\kv})^*$, 
and similarly $G_{\mathbb{V}}\divi{\kv}=-(G_{\Xspace}\gradi{\kv})^*$.
\item\label{it:ortho} With respect to the Gram matrix 
$G_{\Xspace}$ we have the orthogonal 
decomposition 
\begin{equation}\label{eq:ortho_discrete}
\fl
\Xspace_{\kv} = \Mc{N}(\divi{\kv})\oplus \Mc{N}(\curli{\kv})\quad\mbox{and}\quad 
 \Mc{N}(\divi{\kv})= \Mc{R}(\curli{\kv}^{\#}) \qquad
\mbox{for }\kv\neq 0.
\end{equation}
\end{enumerate}
\end{lemma}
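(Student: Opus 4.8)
The plan is to dispatch the four assertions in turn, in each case peeling off the density factor $M_{\rho}^{\Xspace}$ so that only the constant-coefficient operators and the single nontrivial input \Eq{adjointnessD} remain. For part~\ref{it:divcurl} I would first note that the density cancels, $\divi{\kv}\curli{\kv}^{\#}=\divs_{\kv}M_{\rho}^{\Xspace}(M_{\rho}^{\Xspace})^{-1}\curls_{\kv}^{\#}=\divs_{\kv}\curls_{\kv}^{\#}$ and likewise $\curli{\kv}\gradi{\kv}=\curls_{\kv}\grads_{\kv}$, so it suffices to multiply out the constant-coefficient block operators. This is a short computation: every resulting block entry is a difference of two equal terms, because the scalars $ik_{\rm x},ik_{\rm y}$ commute with the identity and derivative blocks and each off-diagonal derivative block occurs exactly once with each sign. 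Part~\ref{it:NDz} is immediate: since $G_{\mathbb{V}}$ is invertible, $\Mc{N}(D_z^{\mathbb{W}})$ equals the kernel of the bidiagonal matrix that maps $w=(w_1,\dots,w_{N_z-1})^{\top}$ to $(-w_1,\,w_1-w_2,\dots,\,w_{N_z-2}-w_{N_z-1},\,w_{N_z-1})^{\top}$, and reading off the first entry and propagating downward forces $w=0$.

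For part~\ref{it:curl_adj} the key is once more that $\rho$ drops out. Writing $G_{\Xspace}=(M_{\rho}^{\Xspace})^{2}\widetilde G$ with $\widetilde G:=\blockdiag(G_{\mathbb{V}},G_{\mathbb{V}},G_{\mathbb{W}})$ and using that $M_{\rho}^{\Xspace}$ commutes with the block-diagonal $\widetilde G$, both sides of $G_{\Xspace}\curli{\kv}^{\#}=(G_{\Yspace}\curli{\kv})^{*}$ share the common left factor $M_{\rho}^{\Xspace}$, whose cancellation reduces the claim to the unweighted identity $\widetilde G\,\curls_{\kv}^{\#}=\curls_{\kv}^{*}G_{\Yspace}$; the grad/div statement reduces analogously, now cancelling a common right factor $M_{\rho}^{\Xspace}$, to $G_{\mathbb{V}}\divs_{\kv}=-(\widetilde G\grads_{\kv})^{*}$. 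Each reduced identity I would verify block by block: the blocks built from $ik_{\rm x},ik_{\rm y}$ match trivially, and the blocks containing $D_z$ are exactly the matrix form $G_{\mathbb{V}}D_z^{\mathbb{W}}=-(D_z^{\mathbb{V}})^{*}G_{\mathbb{W}}$ of \Eq{adjointnessD}.

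Part~\ref{it:ortho} is the substantive one. By part~\ref{it:curl_adj}, $\curli{\kv}^{\#}$ is the adjoint of $\curli{\kv}$ with respect to $G_{\Xspace}$ and $G_{\Yspace}$, so the elementary splitting $\Xspace_{\kv}=\Mc{N}(A)\oplus\Mc{R}(A^{\star})$ (orthogonal in the $G_{\Xspace}$ inner product) applied to $A=\curli{\kv}$ yields $\Xspace_{\kv}=\Mc{N}(\curli{\kv})\oplus\Mc{R}(\curli{\kv}^{\#})$. Part~\ref{it:divcurl} gives $\Mc{R}(\curli{\kv}^{\#})\subseteq\Mc{N}(\divi{\kv})$, and since this subspace lies inside $\Mc{N}(\divi{\kv})$ the modular law turns the inclusion into the desired equality $\Mc{R}(\curli{\kv}^{\#})=\Mc{N}(\divi{\kv})$ precisely when $\Mc{N}(\curli{\kv})\cap\Mc{N}(\divi{\kv})=\{0\}$. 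Because $M_{\rho}^{\Xspace}$ is invertible, this intersection is the image under $(M_{\rho}^{\Xspace})^{-1}$ of $\Mc{N}(\curls_{\kv})\cap\Mc{N}(\divs_{\kv})$, so everything comes down to showing the latter is trivial for $\kv\neq0$; feeding the resulting equality back into the orthogonal splitting then gives $\Xspace_{\kv}=\Mc{N}(\divi{\kv})\oplus\Mc{N}(\curli{\kv})$.

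The hard part, and the place where $\kv\neq0$ is indispensable, is this last triviality, which I would settle by an energy estimate. For $\bsol=(\sol^{\rm x},\sol^{\rm y},\sol^{\rm z})$ in the intersection, the three rows of $\curls_{\kv}\bsol=0$ together with $\divs_{\kv}\bsol=0$ allow (say when $k_{\rm x}\neq0$, the case $k_{\rm x}=0$ being symmetric in $x$ and $y$) the elimination of $\sol^{\rm y}$ and $\sol^{\rm z}$, leaving $\big[(k_{\rm x}^{2}+k_{\rm y}^{2})I-D_z^{\mathbb{W}}D_z^{\mathbb{V}}\big]\sol^{\rm x}=0$. Pairing with $\sol^{\rm x}$ in $\mathbb{V}$ and invoking \Eq{adjointnessD} in the form $\langle D_z^{\mathbb{W}}D_z^{\mathbb{V}}\sol^{\rm x},\sol^{\rm x}\rangle_{\mathbb{V}}=-\|D_z^{\mathbb{V}}\sol^{\rm x}\|_{\mathbb{W}}^{2}\le0$ gives $(k_{\rm x}^{2}+k_{\rm y}^{2})\|\sol^{\rm x}\|_{\mathbb{V}}^{2}+\|D_z^{\mathbb{V}}\sol^{\rm x}\|_{\mathbb{W}}^{2}=0$, so $\sol^{\rm x}=0$ as $\kv\neq0$, and then $\sol^{\rm y}=\sol^{\rm z}=0$. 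For $\kv=0$ the operator $\divs_{0}$ loses surjectivity and the vertical means survive, which is exactly why the decomposition is stated only for $\kv\neq0$ (compare the exclusion of the horizontal means in \Lem{helmholtz_cont}).
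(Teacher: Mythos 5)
Your proposal is correct and follows essentially the same route as the paper: parts (\ref{it:divcurl})--(\ref{it:curl_adj}) by direct computation reducing to \Eq{adjointnessD}, and part (\ref{it:ortho}) by combining $\Mc{N}(\curli{\kv})^{\perp}=\Mc{R}(\curli{\kv}^{\#})\subseteq\Mc{N}(\divi{\kv})$ with the triviality of $\Mc{N}(\curli{\kv})\cap\Mc{N}(\divi{\kv})$, which you establish exactly as the paper does by eliminating $\sol^{\rm y},\sol^{\rm z}$ to get $\big[|\kv|^2 I-D_z^{\mathbb{W}}D_z^{\mathbb{V}}\big]\sol^{\rm x}=0$ and invoking positivity (your energy pairing is the quadratic-form version of the paper's positive-definiteness argument, and your factoring out of $M_{\rho}^{\Xspace}$ is only a cosmetic repackaging of the paper's use of $M_{\rho}^{\mathbb{V}}\sol^{\rm x}$ as the unknown).
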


\begin{proof}
Part (\ref{it:divcurl}) can be verified by straightforward computations. \\
Part (\ref{it:NDz}) is also easy to see, and 
part (\ref{it:curl_adj}) follows from \eref{eq:adjointnessD}. \\
To show part (\ref{it:ortho}) we first demonstrate that 
\begin{equation}\label{eq:discrete_nullspaces}
\Mc{N}(\curli{\kv}) \cap \Mc{N}(\divi{\kv}) = \{0\}
\end{equation}
Let $\bsol_\kv= (\sol_\kv^{\rm x},\sol_\kv^{\rm y},\sol_\kv^{\rm z})\in 
\Mc{N}(\curli{\kv}) \cap \Mc{N}(\divi{\kv})$.  We only treat 
the case $k_{\rm x}\neq 0$ as the case $k_{\rm y}\neq 0$ is analogous. The last line in 
$\curli{\kv}\bsol_\kv=0$ implies that 
\begin{equation}\label{eq:auxlemm1}
k_{\rm x} M_\rho^\mathbb{V} \sol_\kv^{\rm y} = k_{\rm y} M_\rho^\mathbb{V} \sol_\kv^{\rm x}.
\end{equation}
Together with the relation $\divi{\kv}\bsol_\kv=0$ this yields 
\begin{equation}\label{eq:auxlemm2}
ik_{\rm x}D_{z}^{\mathbb W} M_\rho^{\mathbb W} \sol_\kv^{\rm z} = k_{\rm x}^2 M_\rho^\mathbb{V} \sol_\kv^{\rm x} 
+ k_{\rm x}k_{\rm y} M_\rho^\mathbb{V} \sol_\kv^{\rm y}
= |\kv|^2 M_\rho^\mathbb{V} \sol_\kv^{\rm x}.
\end{equation}
From the second line in $\curli{\kv}\bsol_\kv=0$ we obtain 
$ik_{\rm x} M_\rho^\mathbb{W} \sol_\kv^{\rm z}= D_{z}^{\mathbb V} M_\rho^{\mathbb V} \sol_\kv^{\rm x}$, so 
\[
D_{z}^{\mathbb W} D_{z}^{\mathbb V} M_\rho^{\mathbb V} \sol_\kv^{\rm x} = |\kv|^2 M_\rho^\mathbb{V} \sol_\kv^{\rm x}.
\]
Together with \eref{eq:adjointnessD} we find that 
$\left({D_{z}^{\mathbb V}}^* G_{\mathbb{W}}^* D_{z}^{\mathbb V} + 
|k|^2 G_{\mathbb{V}}^{-1}\right) M_\rho^\mathbb{V} \sol_\kv^{\rm x}=0$. 
Since the matrix on the left hand side is strictly positive definite, it follows 
that $\sol_\kv^{\rm x}=0$. 
Now it follows from part (\ref{it:NDz}),
\eref{eq:auxlemm1}, \eref{eq:auxlemm2} and $k_{\rm x}\neq 0$ that 
 $\sol_\kv^{\rm y}=0$ and $\sol_\kv^{\rm z}=0$, completing the proof of 
\eref{eq:discrete_nullspaces}. 

From parts (\ref{it:divcurl}) and (\ref{it:curl_adj}) we obtain
\[
\Mc{N}(\curli{\kv})^{\perp} = \Mc{R}(\curli{\kv}^{\#})
\subset \Mc{N}(\divi{\kv})
\]
with orthogonality with respect to the inner product generated by $G_{\mathbb{X}}$. 
Together with \eref{eq:discrete_nullspaces} this implies \eref{eq:ortho_discrete}.
\end{proof}

\begin{remark}\label{rem:k0}
Let us discuss the case $\kv=0$. 
We claim that in analogy to the continuous situation we have 
\begin{equation}\label{eq:null_intersect}
\fl
\Mc{N}(\curli{\mathbf{0}}) \cap \Mc{N}(\divi{\mathbf{0}}) 
= \braces{\paren{c_{\rm x}(M_{\rho}^{\mathbb{V}})^{-1}e,c_{\rm x}(M_{\rho}^{\mathbb{V}})^{-1}e,0}:
c_{\rm x},c_{\rm y}\in\mathbb{C}}
\end{equation}
where $e\in\mathbb{V}$ is the vector with all entries equal to $1$. 
In fact, for $\bsol_{\mathbf{0}}\in \Mc{N}(\curli{\mathbf{0}}) \cap 
\Mc{N}(\divi{\mathbf{0}})$ it follows follows from part (\ref{it:NDz}) and 
$\divi{{\bf 0}}\bsol_{\bf 0}=0$ that $\sol_{\bf 0}^{\rm z}=0$. Note 
that $\Mc{N}(D_{z}^{\mathbb W})=\{c(M_{\rho}^{\mathbb{V}})^{-1}e:c\in\mathbb{C}\}$. 
Now  \eref{eq:null_intersect} follows from $\curli{\mathbf{0}}\bsol_{\bf 0}=0$.
\end{remark}

The projection matrices onto $\Mc{N}(\curli{\kv})$ and $\Mc{N}(\divi{\kv})$  
can be computed using a QR-decomposition of $G_{\mathbb{X}}^{1/2}\curli{\kv}^{\#}$:
\begin{equation}\label{eq:defiPk}
G_{\mathbb{X}}^{1/2}\curli{\kv}^{\#} = (Q_\kv \;\tilde{Q}_\kv)\left(
\scriptsize{\begin{array}{c}R_\kv \\ 0\end{array}}\right), \qquad 
P_{\kv}:= G_{\Xspace}^{-1/2}Q_{\kv}Q_{\kv}^* G_{\Xspace}^{1/2},\qquad \kv\neq 0.
\end{equation}
Here $R_\kv$ has full row rank $p$, $[Q_\kv \;\tilde{Q}_\kv]$ is unitary, and 
$Q_\kv$ has $p$ columns. We summarize the properties of $P_\kv$:
\begin{lemma}\label{lemm:projections}
Let $\kv\neq 0$. Then 
$P_{\kv}$ is a projection onto $\Mc{N}(\divi{\kv})$ (i.e.\ $P_{\kv}^2=P_{\kv}$ and 
$\Mc{R}(P_{\kv})= \Mc{N}(\divi{\kv})$), and $I-P_{\kv}$ is a projection onto 
$\Mc{N}(\curli{\kv})$. $P_{\kv}$ is orthogonal both with respect to the inner product 
induced by $G_{\Xspace}$ (i.e.\ $P_{\kv}^*G_{\Xspace}= G_{\Xspace}P_{\kv}$) and 
the semi-definit inner product induced by the (Hermitian) Gram matrix 
\[
G^{H^1}_{\kv,\rho} =  G_{\Xspace}\curli{\kv}^{\#} G_{\Yspace} \curli{\kv} 
-G_{\Xspace}\gradi{\kv}G_{\mathbb{V}}\divi{\kv} 
\]
(i.e.\ $P_{\kv}^*G^{H^1}_{\kv,\rho}= G^{H^1}_{\kv,\rho}P_{\kv}$). 
\end{lemma}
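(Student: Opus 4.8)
The plan is to read off everything from the QR factorization in \eref{eq:defiPk} and then transport the orthogonality statements of Lemma~\ref{lemm:prop_discr_DO} through the symmetric, invertible factor $G_{\Xspace}^{1/2}$. Writing $A:=G_{\Xspace}^{1/2}\curli{\kv}^{\#}$, the factorization reads $A=Q_{\kv}R_{\kv}$ with $Q_{\kv}^{*}Q_{\kv}=I_{p}$ and $R_{\kv}$ of full row rank $p$, so $\Mc{R}(A)=\Mc{R}(Q_{\kv})$ and $\Pi:=Q_{\kv}Q_{\kv}^{*}$ is the Euclidean orthogonal projection onto $\Mc{R}(Q_{\kv})=\Mc{R}(A)=G_{\Xspace}^{1/2}\Mc{R}(\curli{\kv}^{\#})$. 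By part~(\ref{it:ortho}) of Lemma~\ref{lemm:prop_discr_DO} (equation \eref{eq:ortho_discrete}) this equals $G_{\Xspace}^{1/2}\Mc{N}(\divi{\kv})$. Since $P_{\kv}=G_{\Xspace}^{-1/2}\Pi\,G_{\Xspace}^{1/2}$ and $\Pi^{2}=\Pi$, I get $P_{\kv}^{2}=P_{\kv}$ and $\Mc{R}(P_{\kv})=G_{\Xspace}^{-1/2}\Mc{R}(\Pi)=\Mc{N}(\divi{\kv})$, which is the first assertion.

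For the $G_{\Xspace}$-orthogonality I would compute directly: since $G_{\Xspace}^{1/2}$ and $\Pi$ are Hermitian, $P_{\kv}^{*}G_{\Xspace}=G_{\Xspace}^{1/2}\Pi\,G_{\Xspace}^{1/2}=G_{\Xspace}P_{\kv}$. Thus $P_{\kv}$ is an idempotent that is self-adjoint for the $G_{\Xspace}$ inner product, i.e.\ the $G_{\Xspace}$-orthogonal projection onto $\Mc{N}(\divi{\kv})$. Consequently $I-P_{\kv}$ is the $G_{\Xspace}$-orthogonal projection onto the $G_{\Xspace}$-orthogonal complement of $\Mc{N}(\divi{\kv})$, which by the decomposition $\Xspace_{\kv}=\Mc{N}(\divi{\kv})\oplus\Mc{N}(\curli{\kv})$ in part~(\ref{it:ortho}) of Lemma~\ref{lemm:prop_discr_DO} is exactly $\Mc{N}(\curli{\kv})$. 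This settles the second and third assertions.

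For the semi-definite $H^{1}$ inner product I would first invoke the adjointness relations $G_{\Xspace}\curli{\kv}^{\#}=(G_{\Yspace}\curli{\kv})^{*}$ and $G_{\mathbb{V}}\divi{\kv}=-(G_{\Xspace}\gradi{\kv})^{*}$ of part~(\ref{it:curl_adj}) of Lemma~\ref{lemm:prop_discr_DO} to rewrite the Gram matrix as the manifestly Hermitian and positive semi-definite sum $G^{H^{1}}_{\kv,\rho}=\curli{\kv}^{*}G_{\Yspace}^{2}\curli{\kv}+\divi{\kv}^{*}G_{\mathbb{V}}^{2}\divi{\kv}$. The two range identities just proved give $\divi{\kv}P_{\kv}=0$ (the range of $P_{\kv}$ lies in $\Mc{N}(\divi{\kv})$) and $\curli{\kv}(I-P_{\kv})=0$, i.e.\ $\curli{\kv}P_{\kv}=\curli{\kv}$. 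Substituting these into $G^{H^{1}}_{\kv,\rho}P_{\kv}$ annihilates the divergence term and leaves the Hermitian matrix $\curli{\kv}^{*}G_{\Yspace}^{2}\curli{\kv}$; since $G^{H^{1}}_{\kv,\rho}$ is Hermitian, $P_{\kv}^{*}G^{H^{1}}_{\kv,\rho}=(G^{H^{1}}_{\kv,\rho}P_{\kv})^{*}=\curli{\kv}^{*}G_{\Yspace}^{2}\curli{\kv}=G^{H^{1}}_{\kv,\rho}P_{\kv}$, which is the final assertion.

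The idempotency and the two symmetry computations are routine. The step requiring the most care is transporting the range and null-space statements of Lemma~\ref{lemm:prop_discr_DO} through the \emph{non-orthogonal} conjugation by $G_{\Xspace}^{1/2}$, so that Euclidean orthogonality of $\Pi$ turns into $G_{\Xspace}$-orthogonality of $P_{\kv}$, and recognizing that the two summands of $G^{H^{1}}_{\kv,\rho}$ are precisely the curl- and divergence-energies aligned with the splitting $\Xspace_{\kv}=\Mc{N}(\divi{\kv})\oplus\Mc{N}(\curli{\kv})$. This alignment is exactly what forces $P_{\kv}$ to be self-adjoint for the $H^{1}$ semi-inner product, and keeping the three Gram structures $G_{\Xspace}$, $G_{\Yspace}$, $G_{\mathbb{V}}$ and their adjoint relations straight is where the bookkeeping must be done carefully.
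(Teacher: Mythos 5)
Your proposal is correct and follows essentially the same route as the paper: idempotency, $G_{\Xspace}$-self-adjointness and the range identification all read off the QR factorization combined with Lemma~\ref{lemm:prop_discr_DO}(\ref{it:ortho}), and the $H^1$-orthogonality is obtained by rewriting $G^{H^1}_{\kv,\rho}$ as the Hermitian sum $\curli{\kv}^*G_{\Yspace}^2\curli{\kv}+\divi{\kv}^*G_{\mathbb{V}}^2\divi{\kv}$, killing the divergence term via $\divi{\kv}P_{\kv}=0$, and invoking Hermitian symmetry. The only cosmetic difference is that you reduce the curl term using $\curli{\kv}P_{\kv}=\curli{\kv}$ while the paper expands $P_{\kv}^*G^{H^1}_{\kv,\rho}$ directly through the QR factors; both land on the same Hermitian matrix.
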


\proof
The identity $P_{\kv}^*G_{\Xspace}=  G_{\Xspace}^{1/2}Q_{\kv}Q_{\kv}^* G_{\Xspace}^{1/2}
= G_{\Xspace}P_{\kv}$ is obvious from the definition. 
We have $P_{\kv}^2 =  G_{\Xspace}^{-1/2}Q_{\kv}(Q_{\kv}^* Q_{\kv})Q_{\kv}^* G_{\Xspace}^{1/2}
=P_{\kv}$, so $P_{\kv}$ is a projection, which implies that $I-P_{\kv}$ is a projection as well. 
Using Lemma \ref{lemm:prop_discr_DO}, parts (\ref{it:curl_adj}) and (\ref{it:divcurl}) we obtain 
\[
\Mc{R}(P_{\kv}) = \Mc{R}(G_{\Xspace}^{-1/2}Q_\kv) 
= \Mc{R}(\curli{\kv}^{\#})
= \Mc{N}(\divi{\kv}). 
\]
Moreover, $\Mc{R}(I-P_{\kv}) = \Mc{R}(P_{\kv})^{\perp} = \Mc{N}(\divi{\kv})^{\perp} 
= \Mc{N}(\curli{\kv}^{\#})$ using \ref{lemm:prop_discr_DO}(\ref{it:ortho}) and the 
self-adjointness of $P_{\kv}$ in $\Xspace$. 
By Lemma \ref{lemm:prop_discr_DO}(\ref{it:curl_adj}) we have 
$G^{H^1}_{\kv,\rho} = \curli{\kv}^*G_{\Yspace}^2\curli{\kv}+\divi{\kv}^*G_{\mathbb{V}}^2\divi{\kv}$, 
so $G^{H^1}_{\kv,\rho}$ is Hermitian and positive semi-definite. 
Moreover, since $\divi{\kv} P_{\kv}=0$ we have 
\begin{eqnarray*}
\fl
\lefteqn{P_{\kv}^*G^{H^1}_{\kv,\rho} 
= \paren{P_{\kv}^*G_{\Xspace}^{1/2}}\paren{G_{\Xspace}^{1/2}\curli{\kv}^{\#}}G_{\Yspace}\curli{\kv} 
= \paren{G_{\Xspace}^{1/2}Q_\kv Q_\kv^* }\paren{Q_\kv R_\kv}G_{\Yspace}\curli{\kv}}\\
&=& G_{\Xspace}^{1/2}\paren{Q_\kv R_\kv}G_{\Yspace}\curli{\kv}
= G_{\Xspace}\curli{\kv}^{\#}G_{\Yspace}\curli{\kv}
= \curli{\kv}^*G_{\Yspace}^2\curli{\kv}.
\end{eqnarray*}
Since the right hand side of this equation is Hermitian, so is the left hand side, which implies 
$P_{\kv}^*G^{H^1}_{\kv,\rho} = G^{H^1}_{\kv,\rho}P_\kv$. 
\qed

For $\kv=0$ we define $P_\kv$ as follows:
\begin{equation}\label{eq:defiP0}
\left(\begin{array}{c}
G_{\mathbb{X}}^{1/2}\curli{0}^{\#} \\
((M_{\rho}^{\mathbb{V}})^{-1}e,0,0) \\
(0,(M_{\rho}^{\mathbb{V}})^{-1}e,0) \\
\end{array}\right)
= (Q_0 \;\tilde{Q}_0)\left(
\begin{array}{c}R_0 \\ 0\end{array}\right), \qquad 
P_{0}:= G_{\Xspace}^{-1/2}Q_{0}Q_{0}^* G_{\Xspace}^{1/2},
\end{equation}
see Remark \ref{rem:k0}. In this case $P_0$ is the orthogonal projection 
onto $\Mc{N}(\divi{0})\oplus \braces{\paren{c_{\rm x}(M_{\rho}^{\mathbb{V}})^{-1}e,c_{\rm x}(M_{\rho}^{\mathbb{V}})^{-1}e,0}:
c_{\rm x},c_{\rm y}\in\mathbb{C}}$. 
%

\subsection{Implementation of the Pinsker estimator with mass conservation constraint}
Let us recall of the definition of the Generalized Singular Value 
Decomposition GSVD (see \cite{VAN76}): 
Let $A \in \mathbb{R}^{m,n}$ and $L  \in \mathbb{R}^{q,n}$ be matrices with $m \geq n$ and $\mathrm{rank}(L) = p$. Then there exist unitary matrices $U \in \mathbb{R}^{m,m}$ and  
$V \in \mathbb{R}^{q,q}$ and an invertible matrix $X \in \mathbb{R}^{n,n}$ such that
 \begin{equation}
  A = U S X^{-1} \quad \mbox{ and } \quad L = V C X^{-1}
 \end{equation}
where $S = \diag(s_1, ...,s_n)\in\mathbb{R}^{m\times n}$ and 
$C = \diag(c_1, ..., c_{\min(q,n)})\in\mathbb{R}^{q\times n}$ with 
$1 \geq c_1 \geq \ldots \geq c_p > c_{p+1}=\dots=0$. 
The generalized singular values $\sigma_i$ of $(A,L)$ are $\sigma_i = s_i / c_i$ for $i = 1, \ldots, p$, and the generalized right singular vectors of 
$(A,L)$ are the first $p$ columns $x_1,\dots,x_p$ of $X$. They satisfy the orthogonality relations 
$x_j^*L^*Lx_k=c_j^2\delta_{j,k}$ for $j,k\in\{1,\dots,n\}$. 
If $L = I$ then the GSVD and the SVD coincide (except for that the ordering of the singular values).



\medskip
We will set $A:= \Lambda_\kv^{-1/2}K_{\kv}P_{\kv}$ with $P_{\kv}$ defined in \Eq{defiPk} and 
$L:=\paren{\begin{array}{c}G_{\Yspace}^{1/2}\curli{\kv} 
\\ G_{\mathbb{V}}^{1/2}\divi{\kv}\end{array}}$. 
For $\kv\neq 0$ this yields vectors $x_1,\dots,x_{\dim(\Xspace_{\kv})}$, $v_1,\dots,v_{\dim(\Xspace_{\kv})}$, 
and $u_1,\dots,u_p$ and numbers $s_1,\dots,s_p,c_1,\dots,c_p>0$ such that 
\begin{eqnarray*}
\Lambda_\kv^{-1/2}K_{\kv}P_{\kv}x_j &=& s_ju_j,\qquad j=1,\dots,p\\
Lx_j &=& c_jv_j,\qquad   j=1\dots,\dim(\Xspace_{\kv}).
\end{eqnarray*}
For $j\leq p$ we have 
\[
x_j\in \Mc{N}(\Lambda_\kv^{-1/2}K_{\kv}P_{\kv})^{\perp}\subset \Mc{N}(P_{\kv})^{\perp}=\Mc{N}(\divi{\kv})
\]
with orthogonality w.r.t.\ the $L$-induced inner product and 
$x_j^{*}G^{H^1}_{\kv,\rho}x_j=\|Lx_j\|^2 = c_j^2$. Therefore $x_j/c_j$ are the 
singular vectors of $A$ w.r.t.\ this inner product, and the $\kv$-th Fourier coefficient 
of the Pinsker estimator is 
\[
\fl
W_\kv\tau_\kv 
= \sum_{j=1}^p \frac{\max(1-\overline{\kappa}a_j,0)}{\sigma_j}
\langle u_{j},\Lambda_\kv^{-1/2}\tau_\kv\rangle \frac{x_j}{c_j} 
= \sum_{j=1}^p \frac{\max(1-\overline{\kappa}a_j,0)}{s_j}
\langle u_{j},\Lambda_\kv^{-1/2}\tau_\kv\rangle x_j.
\]


\begin{algorithm}[ht]
 \KwData{$\bullet$ kernels $K_\kv\in\mathbb{C}^{M\times N_a}$ and noise 
covariance matrices $\Lambda_\kv\in\mathbb{C}^{N_a\times N_a}$ for all frequencies $\kv$\;
$\bullet$ regularization parameter $\overline{\kappa}$\;}
 \KwResult{linear estimator $W_\kv$ for all frequencies $\kv$}
set up Gram matrices $G_{\mathbb{V}},G_{\mathbb{W}},G_{\mathbb{X}}$, and $G_{\mathbb{Y}}$ 
(eqs.~(\ref{eq:GV_GW}), (\ref{eq:GX_GY}))\;
 \For{$\kv \in [-N_k/2,\dots,N_k/2-1]^2$}{
set up matrices $\divi{\kv}$, $\curli{\kv}$  
and $P_\kv$  (eqs.~(\ref{eq:divrho_curlrho}), (\ref{eq:defiPk}),
(\ref{eq:defiP0})) \;
  $[U_\kv,X_\kv, V_\kv, s_\kv,c_\kv] = \mathrm{gsvd}\left(\Lambda_\kv^{-1/2} K_\kv P_\kv, 
\paren{\begin{array}{c}G_{\Yspace}^{1/2}\curls_{\rho,\kv} 
\\ G_{\mathbb{V}}^{1/2}\divs_{\rho,\kv}\end{array}} \right)$ \;
\hspace*{2ex}($U_\kv,X_kv,V_\kv$ are matrices with columns $u_{\kv,j},x_{\kv,j},v_{\kv,j}$\;
\hspace*{2ex}$s_\kv$, $c_\kv$ are vectors with entries $s_{\kv,j}$, $c_{\kv,j}$)\;
}
Find bijective ordering $l: [-\frac{N_k}{2},\dots,\frac{N_k}{2}\!-\!1]^2
\times \{1,\dots,M\}
\to N_k^2N_z$ such that $\frac{s_{\kv,j}}{c_{\kv,j}}\geq 
\frac{s_{\tilde{\kv},\tilde{j}}}{c_{\tilde{\kv},\tilde{j}}}$ if 
$l(\kv,j)\leq l(\tilde{\kv},\tilde{j})$,
$c_{\kv,j},c_{\tilde{\kv},\tilde{j}}>0$. $l(\kv,j)\geq l(\tilde{\kv},\tilde{j})$ 
if $c_{\kv,j}\!=\!0$ and $c_{\tilde{\kv},\tilde{j}}\!>\!0$\;
\For{$\kv \in [-N_k/2,\dots,N_k/2-1]^2$}{
$p_{\kv}=\max\{j:c_{\kv,j}>0\}$\;
\For{$j=1,\dots,p_\kv$}{
$a_{\kv,j}:=l(\kv,j)^{1/3}$\;
$\lambda_{\kv,j}:= \max(1 - \overline{\kappa} a_{\kv,j}, 0)$ \;
}
 $W_\kv = X_\kv[:,1:p_\kv]  \diag\paren{\frac{\lambda_{\kv,1}}{s_{\kv,1}},\dots, 
\frac{\lambda_{\kv,p_{\kv}}}{s_{\kv,p_\kv}}}  
U_\kv[:,1:p_{\kv}]^*  \Lambda_\kv^{-1/2}$ \;
 }
 \caption{Pinsker algorithm with mass conservation constraint. 
}
\label{algo:pinsker_massConservation}
\end{algorithm}

\section{Numerical results} \label{sect:numerics}

In the following we will compare RLS, SOLA and Pinsker methods for recovering three-dimensional velocity fields from travel time measurements on the solar surface. 

To compare the different inversion methods on synthetic data,  we use the velocity model presented in \cite{DOM13} which reproduces an average supergranule. Supergranulation is a convection pattern with an average life time of about 1 day and a characteristic length of around $30\Mm$ that is observed at the surface of the Sun. A representation of the velocity field $\sol^{\mathrm{z}}$ and $\sol^{\mathrm{x}}$  is given in  \Figs{vzReconstructedNoDiv}{vxReconstructed} (top rows).  This velocity is built such that mass is conserved,  which explains the decrease of the amplitude with depth due to the strong density gradient.  These velocities are then convolved with the kernels, and noise is added according to \Eq{travelTime} in order to obtain travel time maps as shown in \Fig{travelTime}. 

\subsection{Reconstruction without mass conservation} \label{sect:numWithout}
In the RLS method we have chosen the regularization term as $H^1$ norm in horizontal 
and vertical directions, and in 
the Pinsker method the ellipsoid $\Theta$ was chosen according to 
\Eq{choice_ellipsoid}) to approximate a ball in the Sobolev space $H^1(V)$. 
The regularization parameters $\alpha$ and $\overline{\kappa}$ have been chosen 
by the discrepancy principle. Although the discrepancy principle performs 
poorly for high dimensional white noise (and is not even well-defined in the 
infinite-dimensional case), here the noise is sufficiently correlated for 
the discrepancy principle to work reasonably well. 
The SOLA weighting kernels are obtained by minimizing \Eq{SOLAmin} with a target function
\begin{equation*}
 \Mc{T}^{\beta, z_j;\alpha,z_l}(\rv) = \exp \left( \frac{\norm{\rv}^2}{2s_h^2} + \frac{\norm{z_j-z_l}^2}{2s_v^2} \right) \delta_{\alpha \beta},
\end{equation*}
where $s_h$ and $s_v$ determine the localization of the averaging kernels in the horizontal and vertical directions. As usual we added a constraint for 
$\kv=0$ via Lagrange multipliers to ensure that the integrals over the averaging 
kernels for horizontal velocities are $1$. This is not possible for the vertical 
velocities since constant vertical flows are in the nullspace of $K$. 
To allow a fair comparison with RLS and Pinsker, 
we did not impose a strong additional penalty to suppress cross-talk as in 
\cite{SVA11} since we found that this induces a significant loss of resolution.

It is well-known in helioseismology that RLS and SOLA can reconstruct horizontal 
velocity components $\sol^{\rm x}, \sol^{\rm y}$ fairly well, but perform poorly for the 
reconstruction of vertical velocity components. 
\Fig{vzReconstructedNoDiv} shows the reconstruction of $\sol^{\rm z}$ for the different methods without mass conservation. As expected, the results for Tikhonov regularization are poor except close to the surface. The SOLA method is a bit better at larger depths and the Pinsker estimator leads to a clear improvement with almost correct reconstructions at $z_t = -3.5\Mm$ and a detection of a positive value of the velocity close to the center at $z_t = -5.5\Mm$. However, the amplitudes of the reconstructed velocities both at $-3.5\Mm$ and 
in particular at $-5.5\Mm$ are too small.

\begin{figure}[ht]
\centering
 \includegraphics[width=0.7\linewidth]{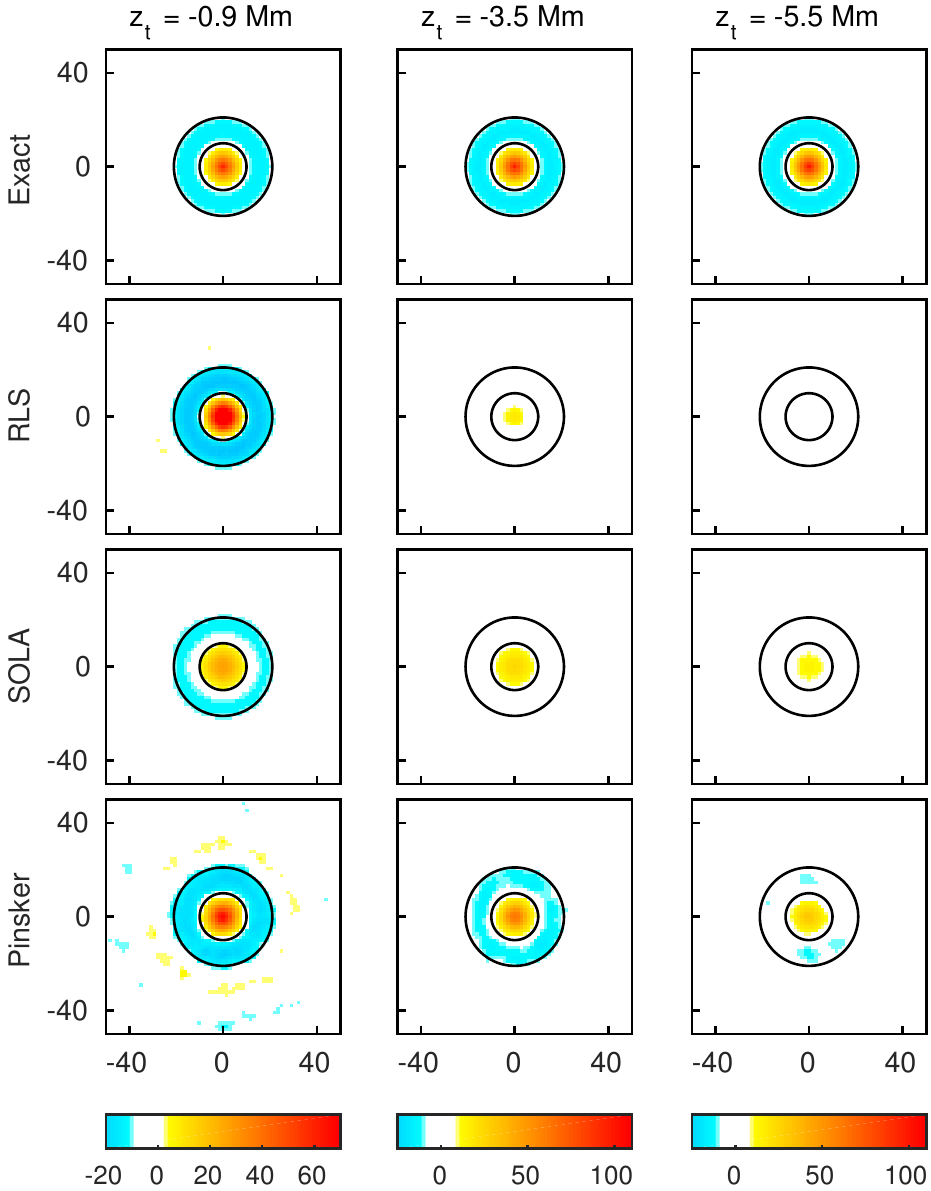}
 \caption{Vertical velocities $\sol^{\mathrm{z}}(x,y,z_{\rm t})$ in m/s of a  
supergranule model from Ref. \cite{DOM13}  (top) and their reconstructions with  
RLS ($2^\textrm{nd}$ row), SOLA ($3^\textrm{rd}$ row) and Pinsker (bottom)  at three different depths $z_{\rm t} \in \{ -0.9\Mm, -3.5\Mm,-5.5\Mm\}$.
The circles at radii $10\Mm$ and $20\Mm$ represent zero level lines of the 
exact solution.}
\label{fig:vzReconstructedNoDiv}
\end{figure}

To understand the difficulties of RLS with the reconstruction of $\sol^{\rm z}$, 
we look at the depth localisation of the averaging kernels. 
To compare the different estimators $W^{\beta}$ for some velocity component $\sol^{\beta}$, 
$\beta\in\{\mathrm{x,y,z}\}$, we choose the parameters in these methods such that the variance 
$\EE[|(W^{\beta}n)(\rv,z_{\rm t})|^2]$ at the target depth $z_{\rm t}$ has the 
same value for all the methods. (Due to translation invariance of the noise covariance 
structure this value is independent of $\rv$.).  
Then we compare the corresponding averaging 
kernels $\Mc{K}^{\beta,z_{\rm t};\alpha,z_j}$ describing the bias 
(see Definition \ref{defi:averaging_kernel}). In \Fig{avgz}, 
we represented the horizontal $L^2$ norm of 
$\Mc{K}^{\mathrm{z},z_{\rm t};\mathrm{z},z_j}$  as a function of the depth $z_j$ for RLS, 
SOLA and Pinsker methods at two different target depths $z_{\rm t}$. One can see that the averaging kernel for the RLS method is mostly localized close to the surface rather than at the target depth. In contrast, the averaging kernel of Pinsker 
is much better localized at $z_t$, but still exhibits some sensitivity to the 
values close to the surface. Intermediately, the SOLA averaging kernel is localized at the correct depth, but is extremely broad, so the reconstruction of $v_z$ at the target depth $z_{\rm t}$ is greatly influenced by the other depths.

\begin{figure}[ht]
\centering
 \begin{tabular}{cc}
  \includegraphics[width=0.45\linewidth]{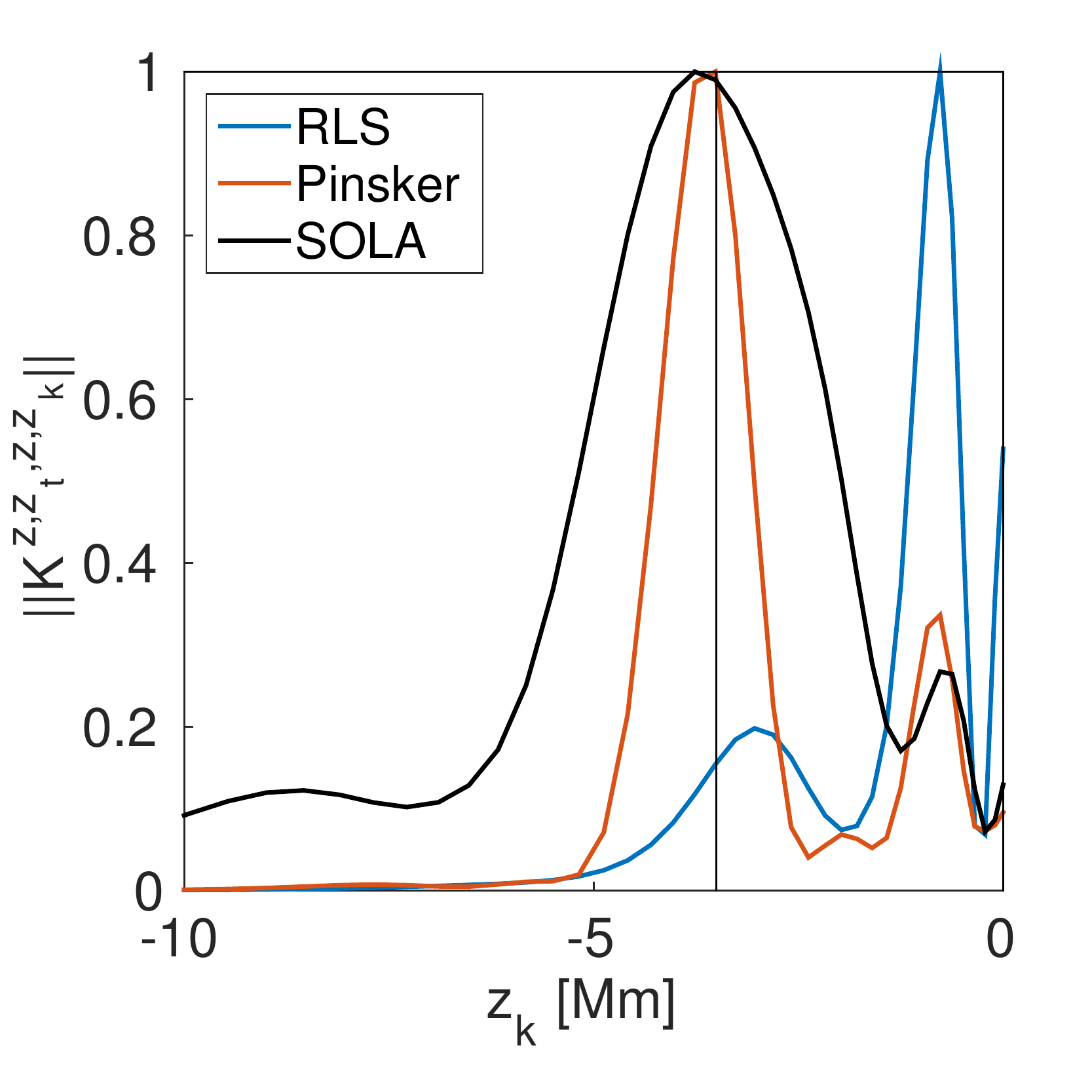} & \includegraphics[width=0.45\linewidth]{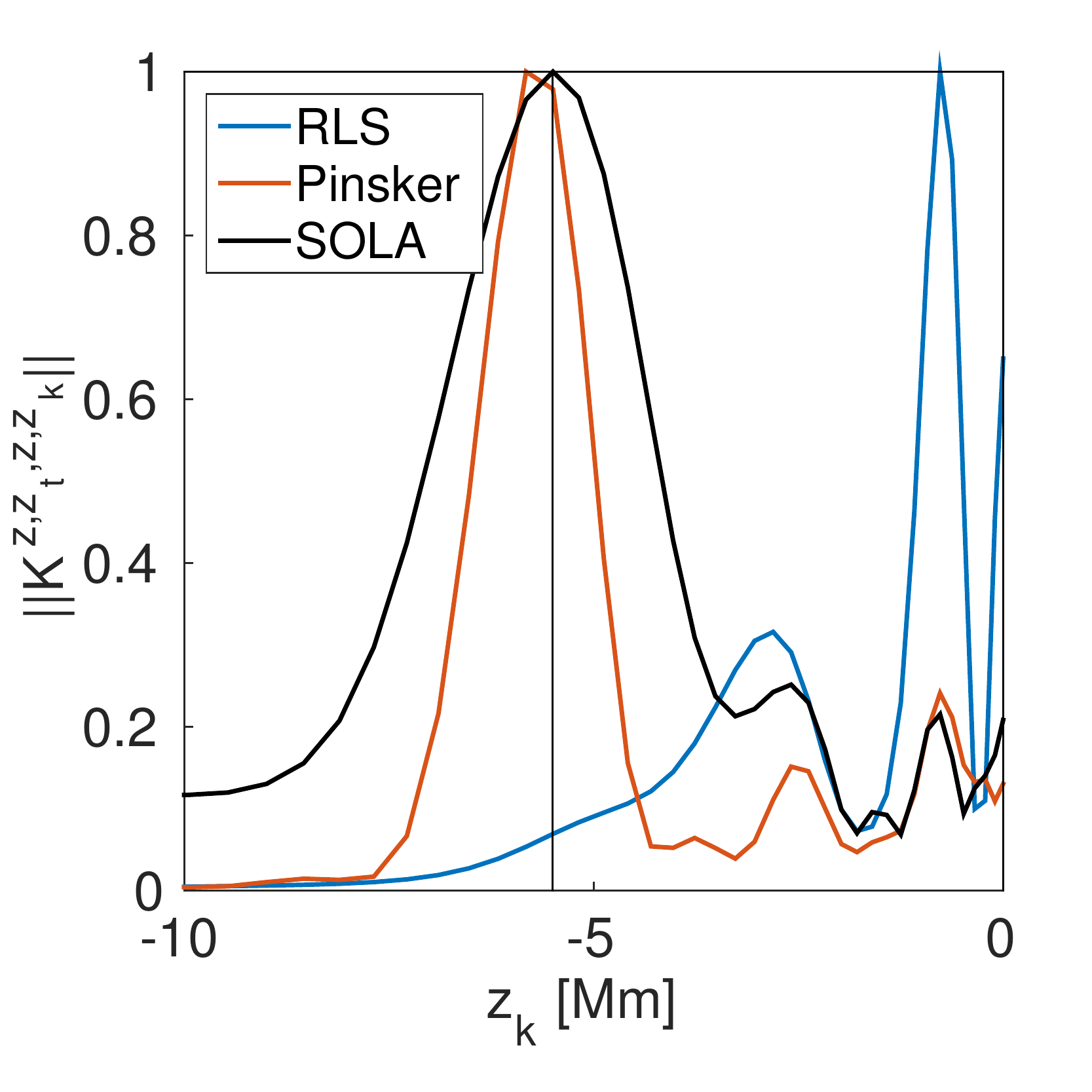}
 \end{tabular}
\caption{Horizontal norm of the averaging kernels $\Mc{K}^{\mathrm{z},z_{\rm t};\mathrm{z},z_k}$ 
as a function of $z_k$ for target depth $z_{\rm t} \in\{ -3.5\Mm, -5.5\Mm\}$ for the estimators 
RLS, SOLA and Pinsker. In each panel the 
regularization parameters are chosen such that 
for all three estimators $W^{\rm z}$ of the vertical velocity 
the standard deviation at the target depth $z_{\rm t}$ is 
$(\mathrm{Var}[(W^{\rm z}\tau)(\rv,z_{\rm t})])^{1/2} = 3\mathrm{m/s}$ 
for all $\rv$ 
for noise corresponding to an averaging time of 4 days.} 
\label{fig:avgz}
\end{figure}

The reconstructions of the horizontal velocity $\sol^{\mathrm{x}}$ by Tikhonov, SOLA and Pinsker methods are shown in \Fig{vxReconstructed}. As expected, all methods perform 
well. Surprisingly, from visual inspection Pinsker seems slightly less accurate 
than Tikhonov regularization at $z_{\rm t}=-5.5\Mm$.  

\begin{figure}[ht]
\centering
 \includegraphics[width=0.7\linewidth]{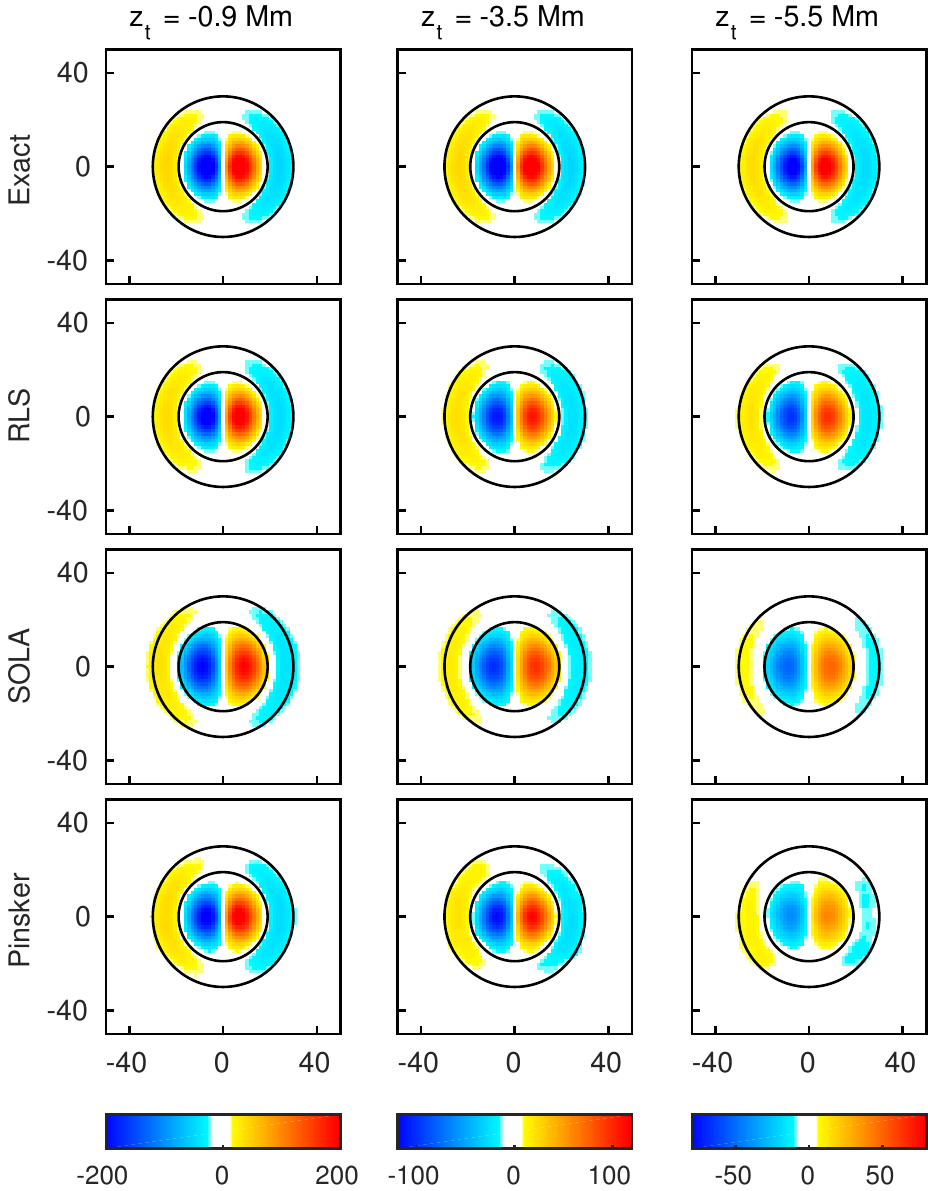}
\caption{Horizontal velocities $\sol^{\mathrm{x}}(x,y,z_{\rm t})$ in m/s of a  
supergranule model from Ref. \cite{DOM13} (top) and their reconstruction with different methods: RLS, SOLA and Pinsker 
(from top to bottom) at three different depths $z_{\rm t} = -0.9\Mm$ (left), $-3.5\Mm$ (middle), and $-5.5\Mm$ (right). The circles at $20\Mm$ and $30\Mm$ indicate the zero 
level lines of the exact horizontal velocity component.}
\label{fig:vxReconstructed}
\end{figure}

To get a better insight into the reconstructions, we can again look at the 
averaging kernels with the same choice of parameters as described above. 
\Fig{compZ} shows $\Mc{K}^{\mathrm{x},z_{\rm t};\mathrm{x},z_j}(x,0)$ as a function 
of $z_j$ and $x$ for three different depths 
$z_{\rm t}\in\{-0.9\Mm,-3.5\Mm,-5.5\Mm\}$ for the RLS, SOLA and Pinsker estimators. 

The differences between the three methods are the more pronounced the greater the target depth $z_{\rm t}$, i.e.\ the greater 
the ill-posedness. The Pinsker averaging kernels turn out to be the most localized, in particular in $z$ direction while
the SOLA averaging kernels are the least localized. 
RLS and Pinsker produce similar averaging kernels for the $\sol^{\rm x}$ estimators, 
which is consistent with the observed reconstructions. However, it is surprising that the reconstruction with the Pinsker method is not the best at $-5.5\Mm$ as 
the averaging kernels are the most localized. To explain this apparent inconsistency, we need to look at the cross-talk, i.e.\ how $\sol^\mathrm{y}$ and $\sol^\mathrm{z}$ influence the 
estimator of $\sol^{\mathrm{x}}$. \Fig{crosstalk} shows the averaging kernels $\Mc{K}^{\mathrm{x},z_{\rm t};\mathrm{y},z_j}$ and 
$\Mc{K}^{\mathrm{x},z_{\rm t};\mathrm{z},z_j}$ at a target depth of $z_{\rm t}=-3.5\Mm$. 
The cross talk is rather strong for Pinsker where the maximum value of the off-diagonal averaging kernels is only 
50\%  smaller than the maximum $\Mc{K}^{x,z_{\rm t};x,z_j}$, as opposed to around 
10\% for RLS and 5\% for SOLA. 

\begin{figure}[ht]
\begin{tabular}{cccc}
 & $z_{\rm t}=-0.9\Mm$ & $z_{\rm t}=-3.5\Mm$ & $z_{\rm t}=-5.5\Mm$ \\
 & \includegraphics[trim= 1cm 12cm 1.5cm 0cm, clip = true, width=0.3\linewidth]{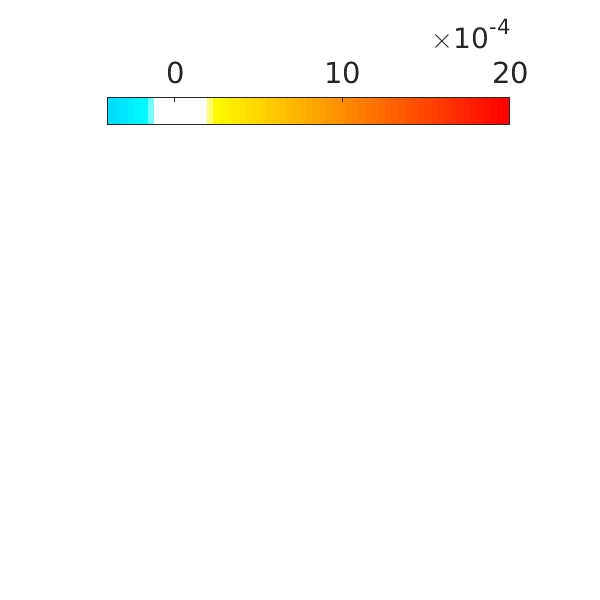} & \includegraphics[trim= 1cm 12cm 1.5cm 0cm, clip = true, width=0.3\linewidth]{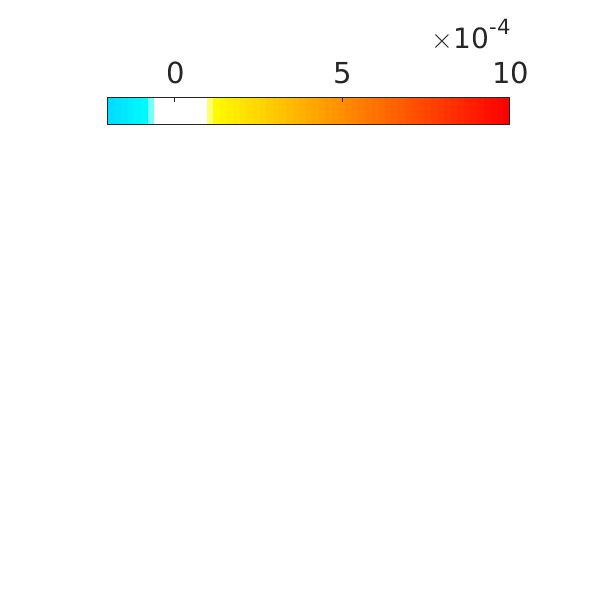} & \includegraphics[trim= 1cm 12cm 1.5cm 0cm, clip = true, width=0.3\linewidth]{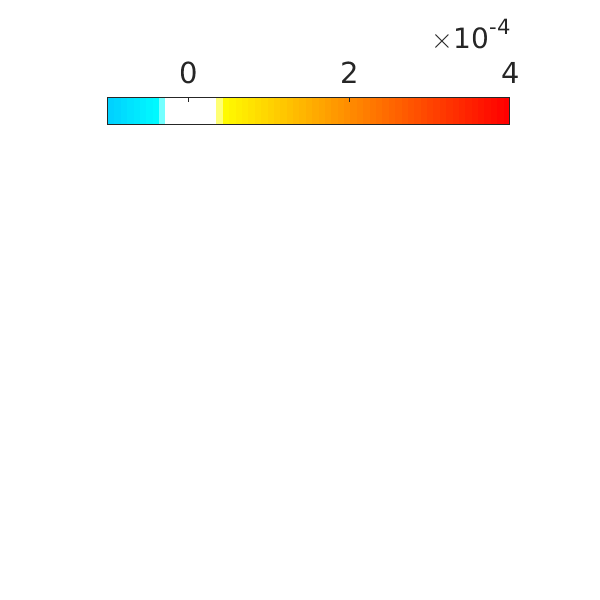} \\
 \raisebox{11ex}{\rotatebox{90}{SOLA}} & \includegraphics[width=0.3\linewidth]{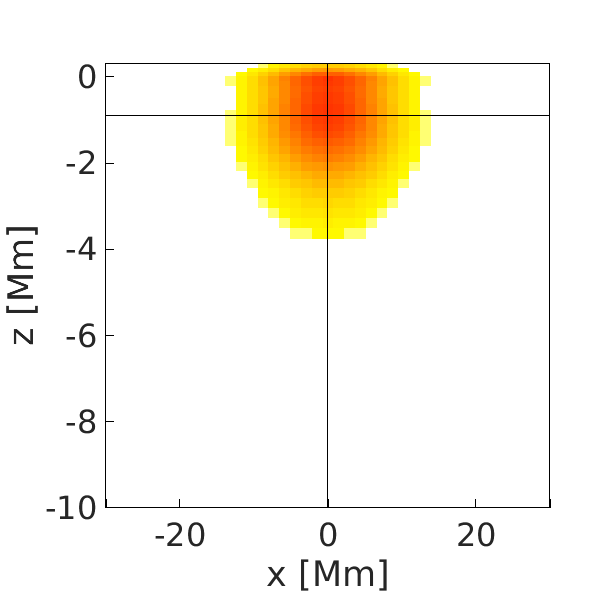} & \includegraphics[width=0.3\linewidth]{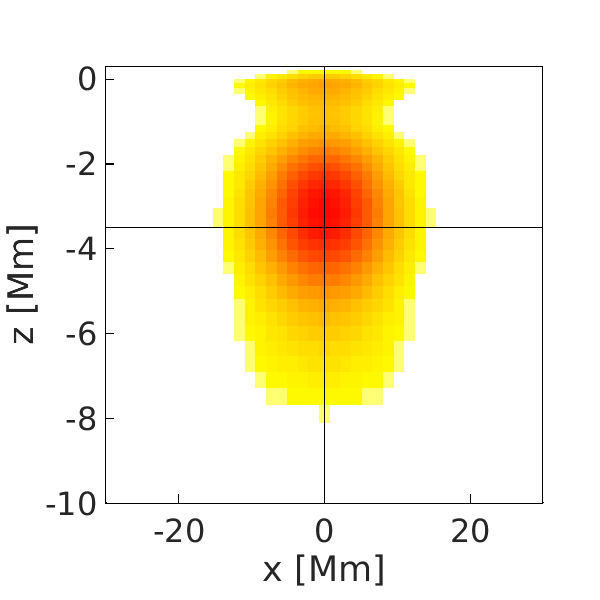} & \includegraphics[width=0.3\linewidth]{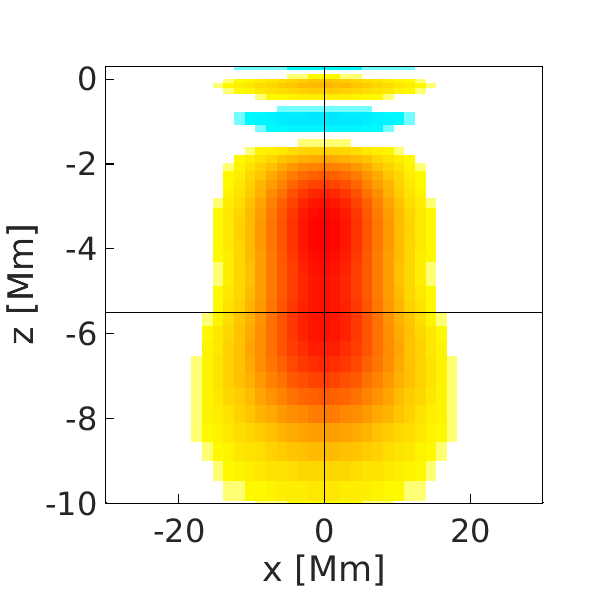} \\
 \raisebox{11ex}{\rotatebox{90}{RLS}} & \includegraphics[width=0.3\linewidth]{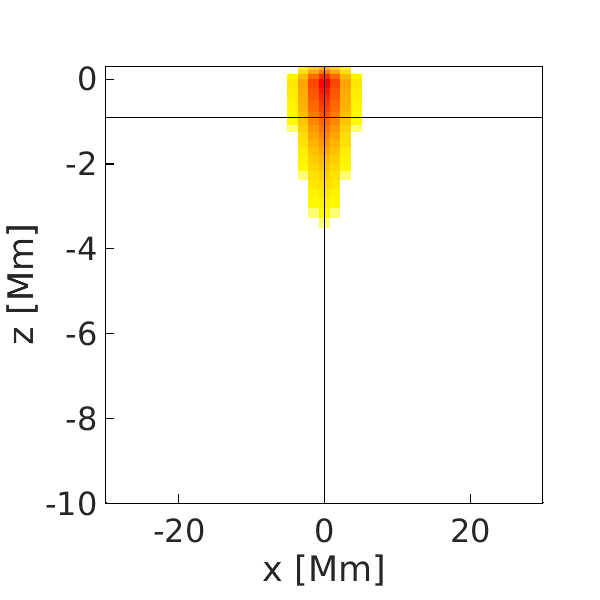} & \includegraphics[width=0.3\linewidth]{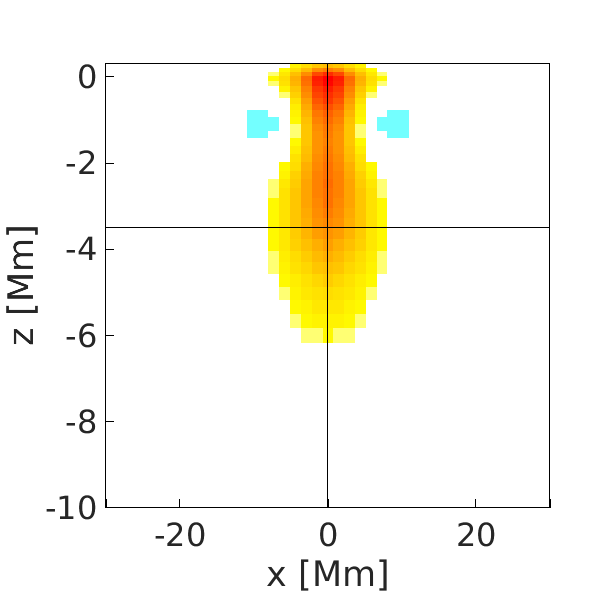} & \includegraphics[width=0.3\linewidth]{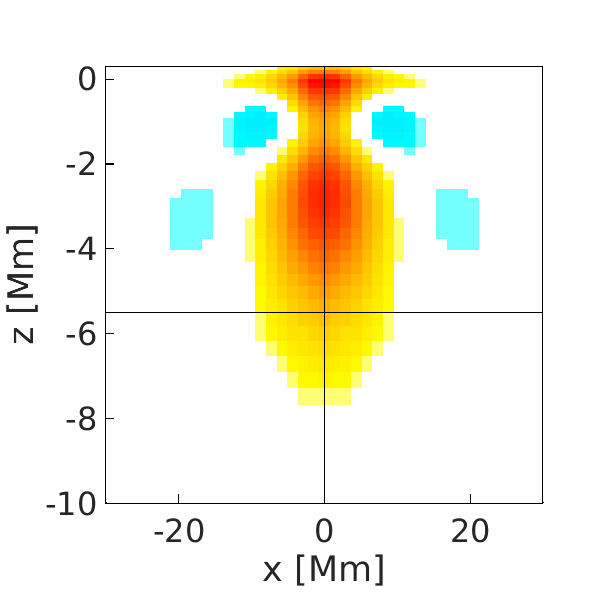} \\
 \raisebox{7ex}{\rotatebox{90}{Pinsker}} & \includegraphics[width=0.3\linewidth]{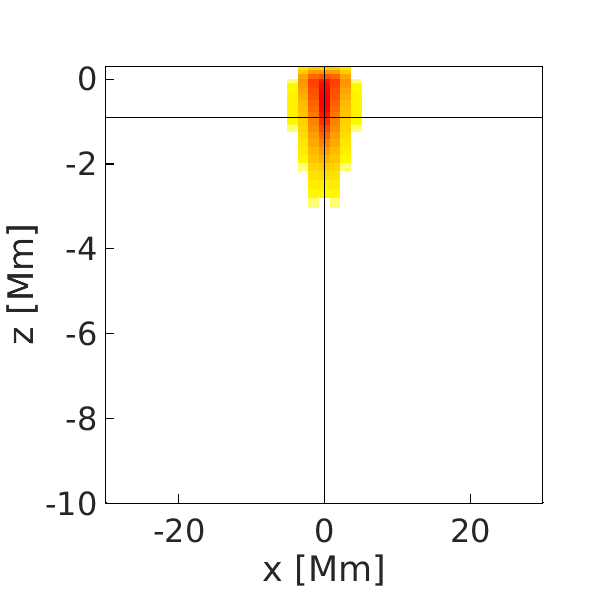} & \includegraphics[width=0.3\linewidth]{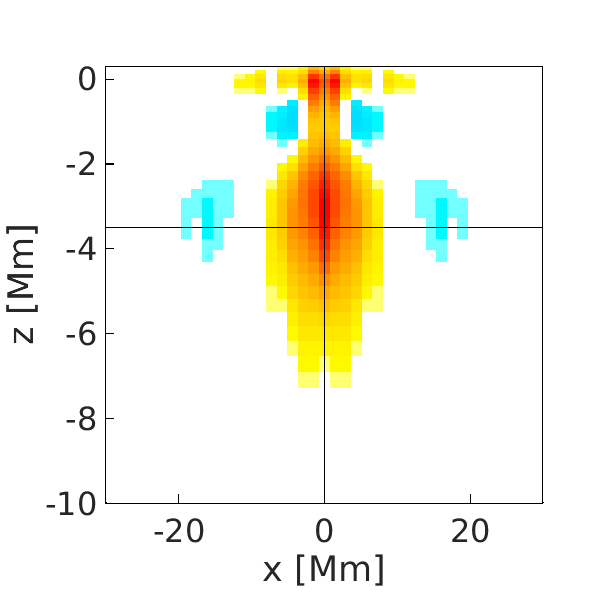} & \includegraphics[width=0.3\linewidth]{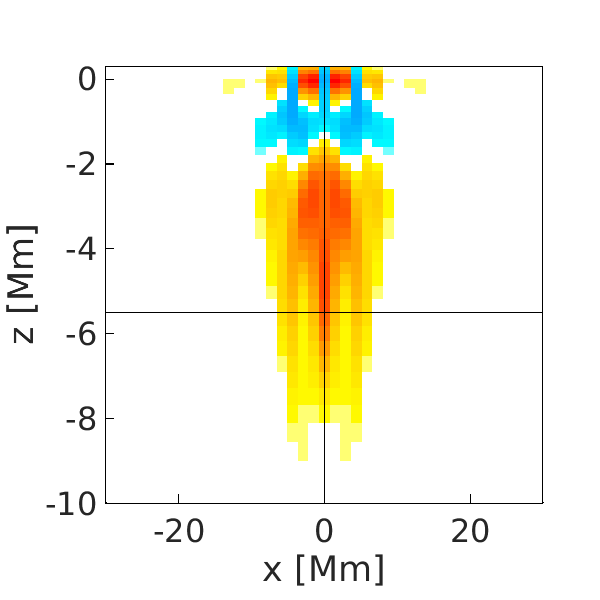} 
\end{tabular}
\caption{Averaging kernels $\Mc{K}^{\mathrm{x},z_{\rm t};\mathrm{x},z_j}(x,0)$ 
(characterizing the $\sol^{\rm x}$ influence on the bias of the $\sol^{\rm x}$ estimators) 
as functions of $x$ and $z_j$ for target depths 
$z_{\rm t} \in\{ -0.9\Mm, -3.5\Mm, -5.5\Mm\}$ for the estimators 
SOLA, RLS, and Pinsker. The variances of these estimators for each target 
depth are chosen to be of the same size.} \label{fig:compZ}
\end{figure}

\begin{figure}[ht]
\begin{center}
\begin{tabular}{ccc}
& $\sol^{\rm x}$-$\sol^{\rm y}$ crosstalk & $\sol^{\rm x}$-$\sol^{\rm z}$ crosstalk\\ 
\multicolumn{3}{c}{\includegraphics[trim= 1cm 12cm 2cm 0cm, clip = true, width=0.7\linewidth]{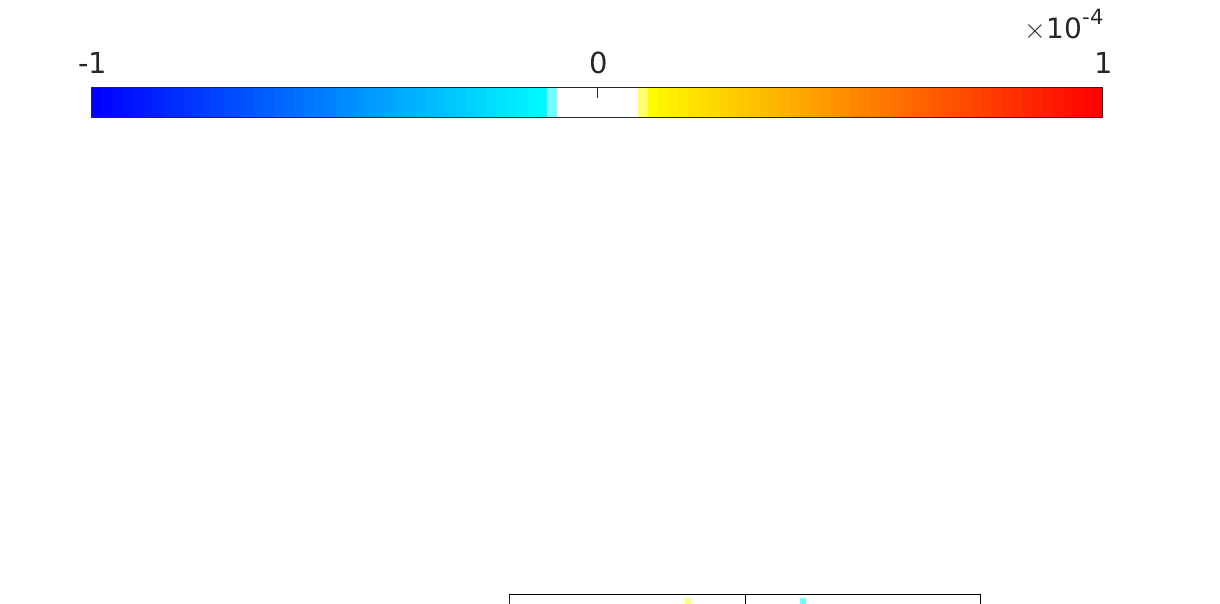}} \\
\raisebox{10ex}{\rotatebox{90}{SOLA}}
& \includegraphics[width=0.3\linewidth]{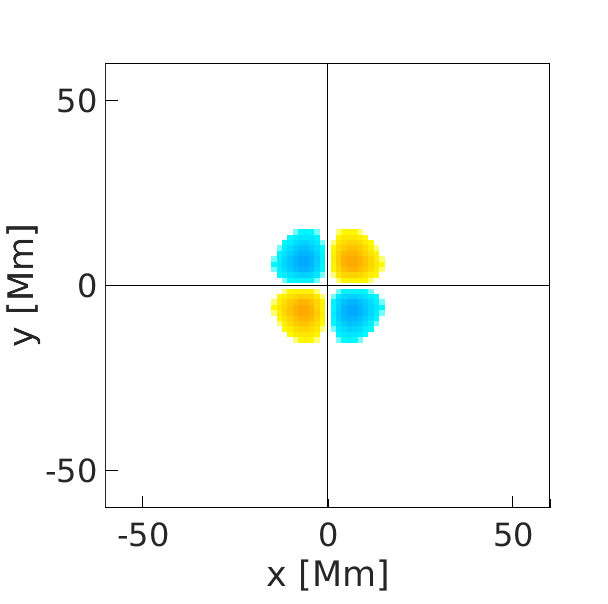} 
& \includegraphics[width=0.3\linewidth]{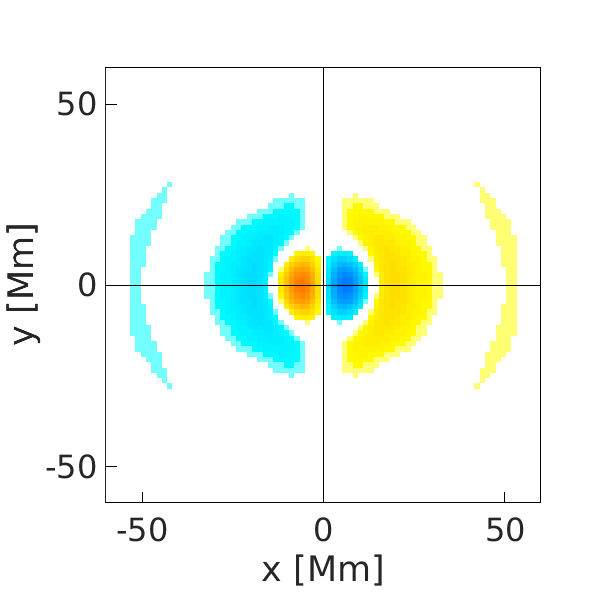} \\
\raisebox{10ex}{\rotatebox{90}{RLS}}
&\includegraphics[width=0.3\linewidth]{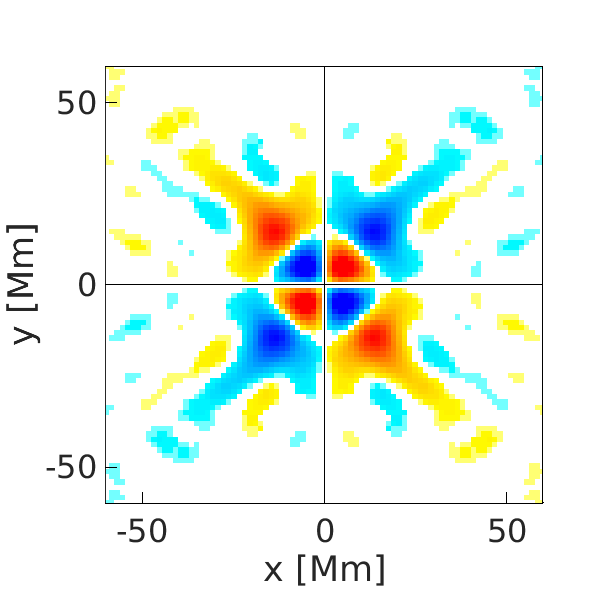} 
&\includegraphics[width=0.3\linewidth]{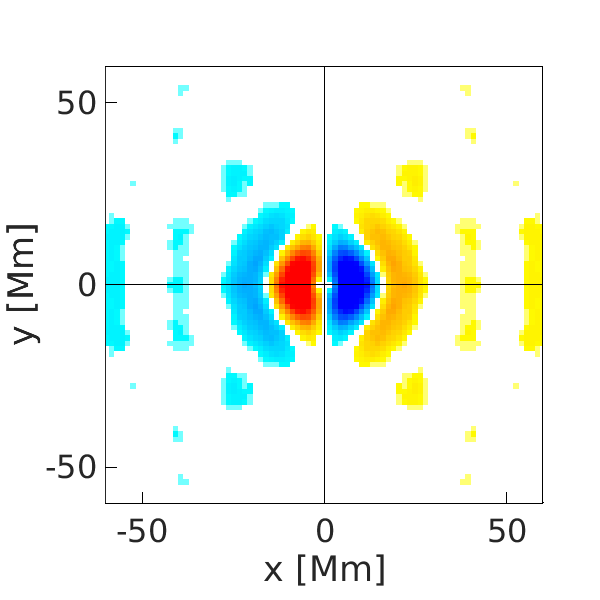} \\
\raisebox{8ex}{\rotatebox{90}{Pinsker}}
& \includegraphics[width=0.3\linewidth]{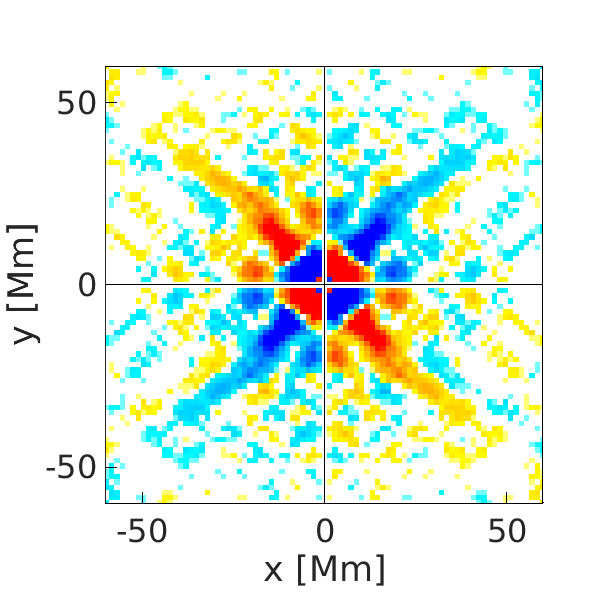} 
& \includegraphics[width=0.3\linewidth]{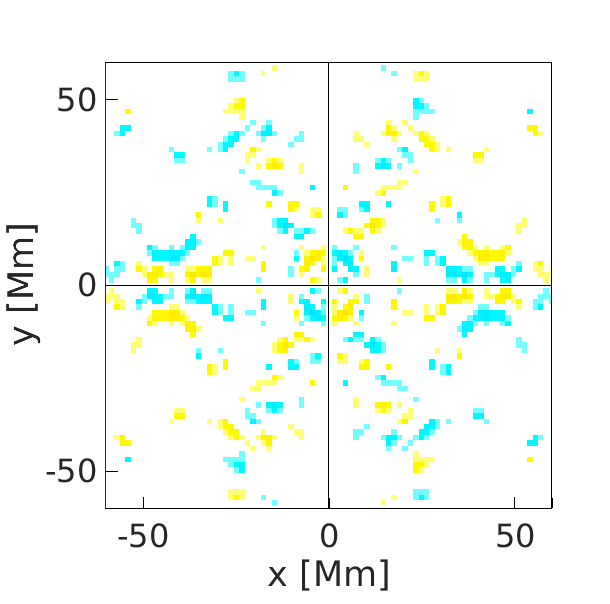}  
 \end{tabular}
\end{center}

 \caption{\label{fig:crosstalk}
Cross-talk averaging kernels $\Mc{K}^{\mathrm{x},z_{\rm t};\mathrm{y},z_j}$ and $\Mc{K}^{\mathrm{x},z_{\rm t};\mathrm{z},z_j}$ for $z_{\rm t} = -3.5\Mm$  for the SOLA, 
Tikhonov, and Pinsker methods. These kernels characterize the influence 
of the variables $\sol^\mathrm{y}$ and $\sol^\mathrm{z}$ on the bias of the $\sol^{\mathrm{x}}$-estimators.}
\end{figure}

\subsection{Incorporation of the divergence constraint}

\Fig{vzReconstructedDiv} shows the reconstruction of the vertical component of the velocity for the Tikhonov and Pinsker methods with mass conservation constraint. 
It underlines the importance of incorporating the constraint into the inversion 
process. The vertical velocity is now properly reconstructed by both methods.

To better compare all the methods, \Fig{vzCut} represents a cut of the 
vertical velocity at $x=0$ and $z_t \in \{-3.5\Mm, -5.5\Mm\}$. 
Incorporating mass conservation into Tikhonov leads 
to a quite good reconstruction with an amplitude of about 70\% of the true one. 
Finally, Pinsker with mass conservation is almost perfect at the depths up to $-5.5\Mm$ with correct shape and amplitude.  
 

\begin{figure}[ht]
\centering
 \includegraphics[width=0.7\linewidth]{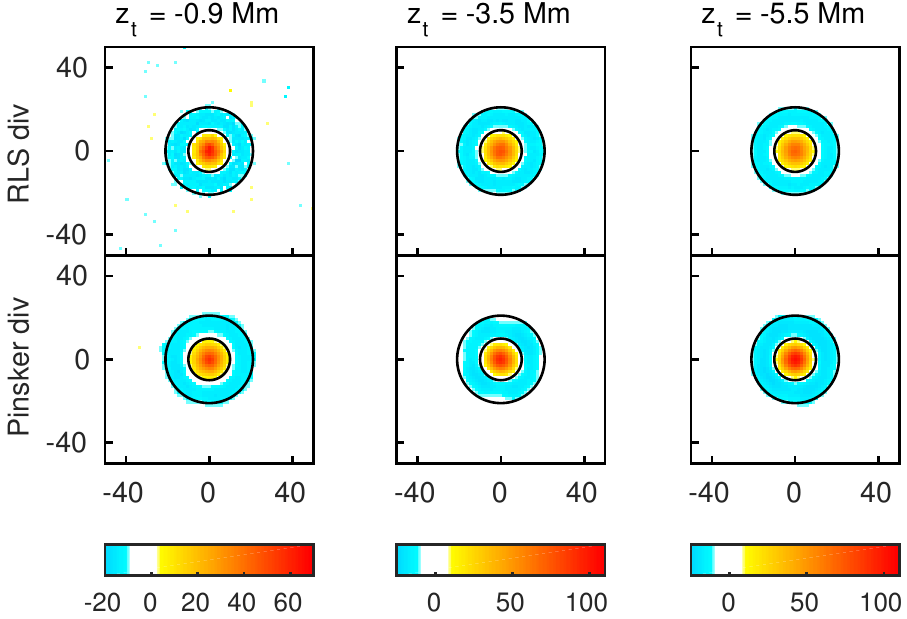}
 \caption{Reconstructions for $v_z$ as in \Fig{vzReconstructedNoDiv} but by imposing mass conservation (top: RLS, bottom: Pinsker).}
\label{fig:vzReconstructedDiv}
\end{figure}

\begin{figure}[ht]
 \begin{tabular}{cc}
 \includegraphics[width=0.45\textwidth]{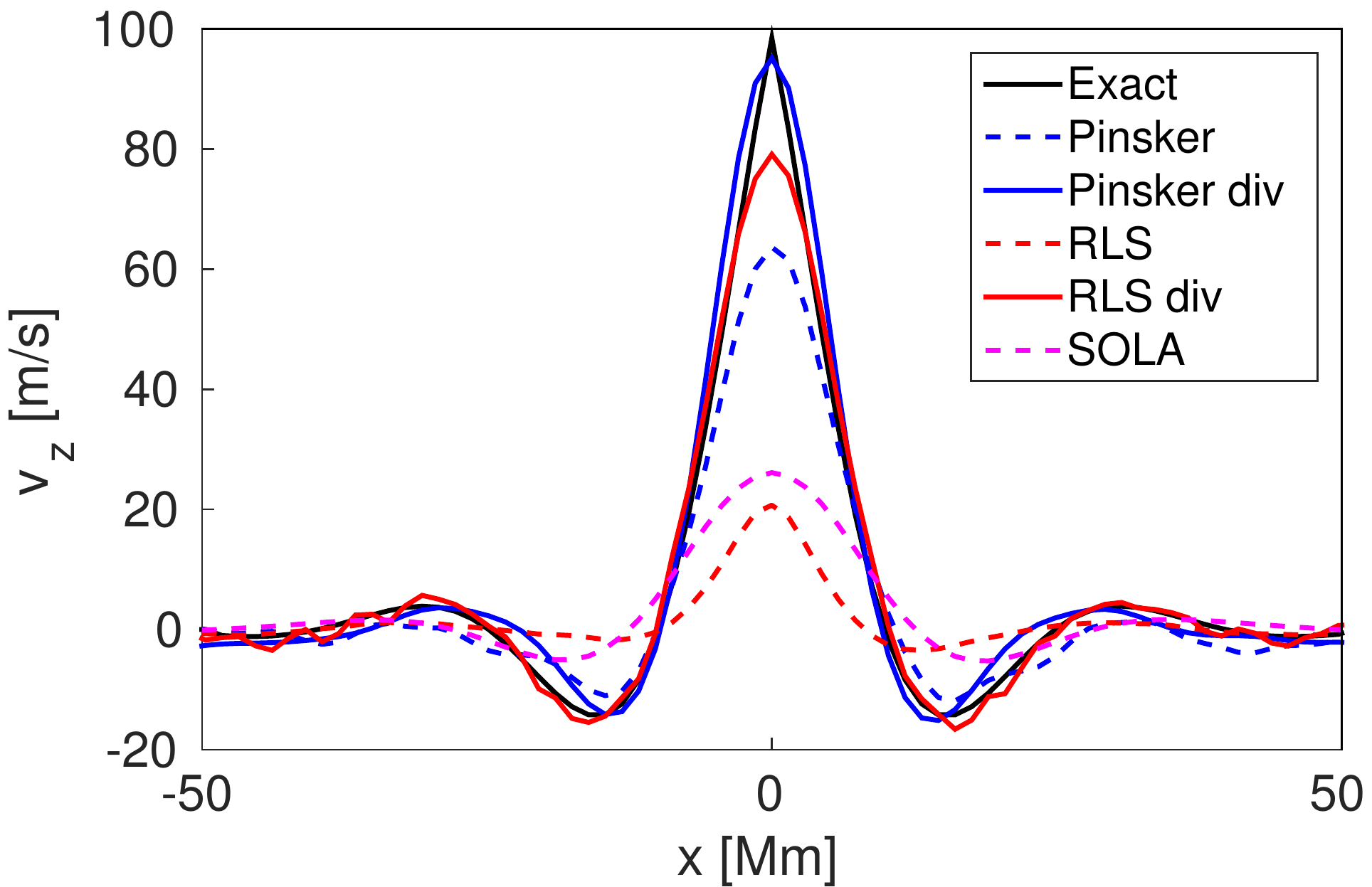} &  \includegraphics[width=0.45\textwidth]{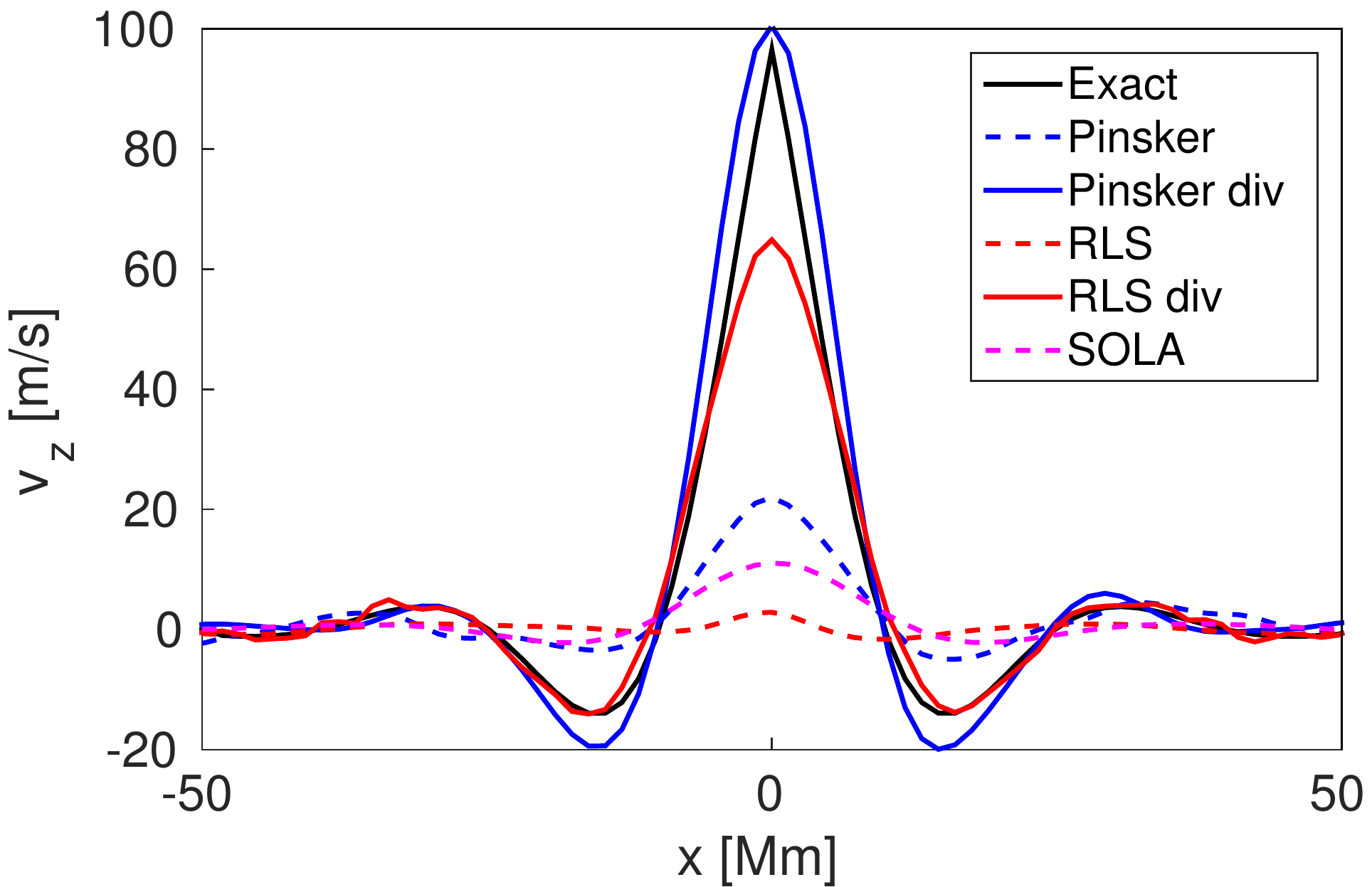}
  \end{tabular}
 \caption{Comparison of the different methods to reconstruct the vertical velocity $v_z(x,0,z_t)$ at $z_t = -3.5\Mm$ (left) and $z_t = -5.5\Mm$ (right). The Pinsker method with mass conservation provides the best reconstruction.}
\label{fig:vzCut}
 \end{figure}

Finally, we also study averaging kernels. Note that in the case of divergence 
constraints we have to think again about the definition of such kernels as 
$\delta$-peaks are not divergence free. 
We redefine the Fourier coefficients of the averaging kernel as
 \begin{equation}
  \Mc{K}_{k,\mathrm{div}} = (I-P_\kv) + P_\kv \Mc{K}_k P_\kv \label{eq:avgkernelDiv}
 \end{equation}
where $P_{\kv}$ denotes the $L^2$-orthogonal projection on the nullspace of $\divs_{\rho}$. 
This type of kernel still characterizes the bias of regularization methods if they 
are applied to solutions satisfying the mass conservation constraint and if 
$P_\kv$ is applied as a 
postprocessing step. Note that this definition implies that the Fourier coefficients 
of the averaging kernel are non-zero even at high frequencies due to the identity term. Thus, these averaging kernels cannot be directly compared to the ones of \Sec{helio}
(and thus to the ones classically used in helioseismology), but their definition using \Eq{avgkernelDiv} is natural as the convolution of $\Mc{K}_{\mathrm{div}}$ with the velocities characterizes the bias of the method. 

In \Fig{avgkernelDiv} we plot at each voxel the Frobenius norm of the 
$3\times 3$ matrix of averaging kernels of Pinsker with divergence constraints, 
i.e.\ the Euclidean norm of all the 9 averaging kernels at this voxel. 
Note that these kernels are very well localized even in $z$-direction and 
at $-5.5\Mm$. This explains the significant improvement in estimating vertical 
velocities achieved by the incorporation of the mass conservation constraint. 
We can even achieve a reasonable resolution in $z$-direction, which is not needed  
for reconstructing the supergranule model used as test example.  
\begin{figure}[ht]
\centering
\begin{tabular}{ccc}
$z_t = -0.9\Mm$ & $z_t = -3.5\Mm$ & $z_t = -5.5\Mm$ \\
 \includegraphics[trim=0cm 0cm 1cm 0cm, clip=true, width=0.3\linewidth]{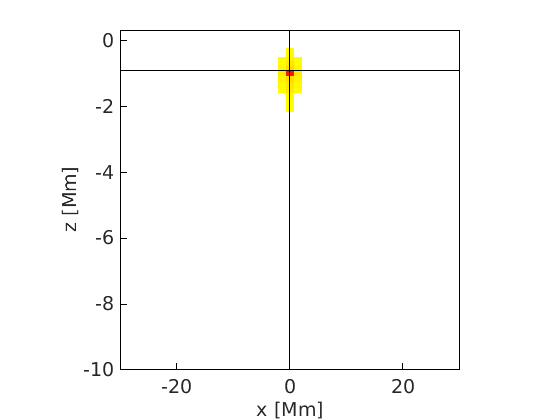} & \includegraphics[trim=0cm 0cm 1cm 0cm, clip=true, width=0.3\linewidth]{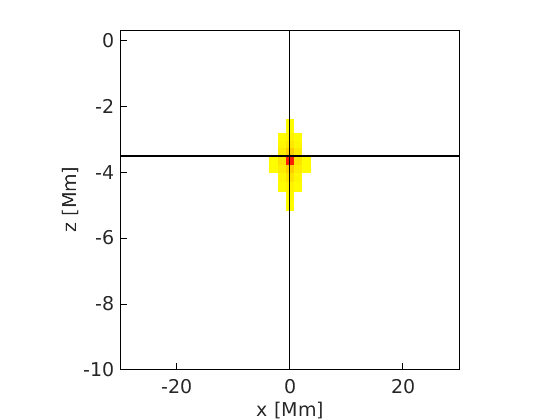} & \includegraphics[trim=0cm 0cm 1cm 0cm, clip=true, width=0.3\linewidth]{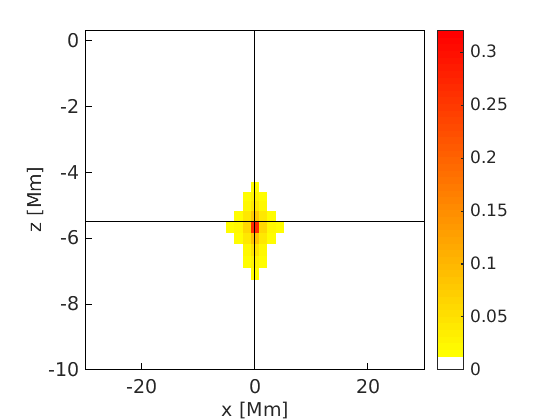}
\end{tabular}
 \caption{Pointwise Frobenius norm of the 3x3 averaging kernels $\Mc{K}_{\mathrm{div}}$ for the Pinsker method with mass conservation at three different depths $z_t \in\{-0.9\Mm, -3.5\Mm, -5.5\Mm\}$.}
\label{fig:avgkernelDiv}
\end{figure}

\section{Conclusions}
We have shown that Pinsker estimators yield significantly better reconstructions 
of vertical velocities from travel time maps than Tikhonov regularization, 
and is also superior to Subtractive Optimally Localized Averaging. 
This is consistent with theoretical 
optimality properties of these estimators. However,  as soon as depth inversion is involved, 
no simple, precise characterization of the ellipsoids on which Pinsker method is optimal 
is available. This is the usual situation for all spectral regularization methods 
such as Tikhonov regularization, Showalter's method, Landweber iteration, and many others 
in a deterministic context. As opposed to many other real-world problems, Pinsker 
estimators are computationally efficient and easy to implement in the context 
of local helioseismology. 

The mass conservation constraint can be incorporated naturally into the Pinsker 
estimator leading to another significant improvement of accuracy and resolution.
Under realistic noise levels this yields reliable estimators of vertical 
velocity components up to a depth of $-5.5\Mm$ using travel times from $f$ and $p_1$ to $p_4$ modes.

Alternatively, one may study an adaptive, data-driven choice of the size of the 
ellipsoid in the Pinsker method, which may be interpreted as a regularization parameter. 
We plan to address this as well as the application to real data in future work.  

\section*{Acknowledgement} The authors would like to thank Michal {\v S}vanda for providing the source codes and the sensitivity kernels used in the paper \cite{SVA11} and for helpful discussions. We would like to thank Takashi Sekii for useful comments on an earlier version of the manuscript.  
We acknowledge financial support from Deutsche Forschungsgemeinschaft through SFB-963 
``Astrophysical Flow Instabilities and Turbulence'' (Project A1). L.G. acknowledges support from the Center for Space Science, NYU Abu Dhabi
Institute, Abu Dhabi.

\section*{Appendix}
This appendix contains the proof of \Lem{helmholtz_cont}.

\begin{proof}
We make the substitutions $\bmom = \rho\bsol$ and $\bq = \rho\bw$. 
To prove \eref{it:grad_curl_div}, note that 
\[
\sum_{\beta\in\{\rm x,y,z\}} \langle\grad \mom^{\beta},\grad \qmom^{\beta}\rangle_{L^2(V)^3}
= \sum_{\beta,\gamma\in\{{\rm x,y,z}\}} \diffq{\mom^{\beta}}{\gamma} 
\overline{\diffq{\qmom^{\beta}}{\gamma}}
\]
and 
\begin{eqnarray*}
&&\langle \curl \bmom,\curl \bq\rangle_{L^2(V)^3}  + \langle \div \bmom,\div\bq\rangle_{L^2(V)}\\
&=& \sum_{\beta,\gamma\in\{{\rm x,y,z}\}} 
\int_V \diffq{p^{\beta}}{\gamma} \overline{\diffq{q^{\beta}}{\gamma}}\,\mathrm{d}x
+ \sum_{\beta\neq \gamma}\int_V\left[
\diffq{\mom^{\beta}}{\beta} \overline{\diffq{q^{\gamma}}{\gamma}}
-\diffq{\mom^{\beta}}{\gamma} \overline{\diffq{q^{\gamma}}{\beta}}
\right]\,\mathrm{d}x
\end{eqnarray*}
For all terms in the second sum (coming among other terms from the $\curl$ part) 
we can perform partial integrations without boundary terms to see 
that these terms vanish. 
(Note that this would not work without the Dirichlet boundary conditions for the $z$-components, 
e.g.\ for $\beta=\mathrm{x}$ and $\gamma=\mathrm{z}$.) 
Therefore, the left hand sides of the last two equations are equal. 

Part (\ref{it:norm_equivalence}): As 
\[
\|\bmom\|_{H^1(V)^3}^2 = 
 \sum_{\beta\in\{{\rm x,y,z}\}} \|\mom^{\beta}\|_{L^2(V)}^2+ \|\grad \mom^{\beta}\|_{L^2(V)^3}^2
\]
the second inequality in \eref{eq:norm_equivalence} 
follows from \eref{eq:grad_curl_div} and the Cauchy-Schwarz inequality 
$\int_V \mom^{\beta}\mathrm{d}(\mathbf{r},z)\leq \|\mom^{\beta}\|_{L^2} |V|^{1/2}$ 
for $\beta\in\{\mathrm{x,y}\}$. \\
To prove the first inequality in \eref{eq:norm_equivalence} it suffices to show that there exists 
a constant $C\geq 0$ such that 
\[
\|\mom^{\beta}\|_{L^2}\leq C \|\grad \mom^{\beta}\|_{L^2}
+ \frac{C(1-\delta_{\beta,z})}{|V|}\left|\int_V \mom^{\beta}\mathrm{d}(\mathbf{r},z)\right|
\qquad \mbox{for all }\beta\in \{{\rm x,y,z}\}
\]
and all $\bmom\in \mathbb{X}$. 
For $\beta={\rm z}$ this follows from the Poincar\'e inequality due to the 
Dirichlet boundary conditions, and for $\beta\in\{{\rm x,y}\}$ it is a consequence of the 
Poincar\'e-Wirtinger inequality. 

Part (\ref{it:ortho_cont}): 
To show orthogonality w.r.t.\ the inner product $\lsp\rho\bsol,\rho\bw\rsp_{L^2(V)}$, 
let $\bsol\in \Mc{N}_{\diamond}(\curls_{\rho})$. 
Then by potential theorems (see e.g.\ \cite[Thm. 3.37]{monk:03}) 
we have $\rho\bsol = \grad f$ for some $f\in H^1(V)$. 
It follows by partial integration that 
$\lsp\rho\bsol,\rho\bw\rsp_{L^2(V)}= \int_V f \divs_{\rho}\bw\D x = 0$ for all 
$\bw\in \Mc{N}_{\diamond}(\divs_{\rho})$ where the boundary terms vanish due to the boundary 
conditions. Hence, $\bsol\perp \Mc{N}_{\diamond}(\divs_{\rho})$ w.r.t.\ the weighted 
$L^2$ inner product. 
Together with \eref{eq:grad_curl_div} we also obtain orthogonality w.r.t.\ 
the $\Xspace$ inner product. \\
Let $\bsol\in\Xspace_{\diamond}$ satisfy $\lsp \rho\bsol,\rho\bw\rsp_{L^2(V)}=0$ for all 
$\bw\in \mathcal{N}_{\diamond}(\divs_{\rho})$ and all $\bw \in\mathcal{N}_{\diamond}(\curls_{\rho})$.  
We aim to show that $\bsol=0$.  
Since $\divs_{\rho}(\rho^{-1} \curl \mathbf{g})= 0$ and
$\curls_{\rho}(\rho^{-1} \grad f)=0$ for all smooth $f$ and $\mathbf{g}$ vanishing at the boundaries, 
we may choose $\bw=\rho^{-1}\curl\bg$ or $\bw = \rho^{-1}\grad f$ and perform 
partial integrations to 
obtain $\lsp \curls_{\rho} \bsol,\bg\rsp_{L^2(V)} = 0$ and $\lsp\divs_{\rho}\bsol,f\rsp =0$. 
Therefore $\curls_{\rho} \bsol=0$ and $\divs_{\rho}\bsol=0$, and so $\bsol=0$ 
from part (\ref{it:norm_equivalence}). This shows that the sum of the 
nullspaces is dense in $L_{\diamond}^2(V)$. Since the nullspaces are closed 
and orthogonal in $\Xspace_{\diamond}$, their sum equals $\Xspace_{\diamond}$. 
\end{proof}

\section*{References}
\bibliographystyle{plain}
\bibliography{biblio}

\end{document}